\documentclass[a4paper]{amsart}

\usepackage[english]{babel} 

\usepackage{aeguill} 

\usepackage{tikz}
\usepackage{tikz-cd}
\usepackage{amsmath}
\usepackage{amssymb}
\usepackage{mathrsfs}
\usepackage{bm}
\usepackage{mathtools}
\usepackage{pigpen}
\usepackage{picture}
\usepackage{hyperref}
\usepackage{url}
\usepackage{graphicx}
\usepackage{csquotes}
\usepackage{cmll}

\makeatletter
\tagsleft@false
\makeatother

\numberwithin{equation}{section}
\newcommand{\normono}{\mathbin{\tikz[baseline] \draw[{Triangle[open,reversed,length=1.5mm,width=1.5mm]}->] (0pt,0.75ex) -- (2.5ex,0.75ex);}}
\newcommand{\norepi}{\mathbin{\tikz[baseline] \draw[-{Triangle[open,length=1.5mm,width=1.5mm]}] (0pt,0.75ex) -- (2.5ex,0.75ex);}}
\newcommand{\pullback}{\text{\pigpenfont~A}}
\newcommand{\pushout}{\text{\pigpenfont~I}}

\newcommand{\ch}{\mathsf{Ch}}
\newcommand{\simp}{\mathsf{Simp}}
\newcommand{\dsimp}{\mathsf{DSimp}}
\newcommand{\deltacat}{\mathsf{\Delta}}

\renewcommand{\ker}{\mathrm{ker}}
\newcommand{\coker}{\mathrm{coker}}
\newcommand{\im}{\mathrm{im}}

\newcommand{\op}{\mathrm{op}}

\newcommand{\map}{-{Straight Barb[length=2mm,width=2mm]}}
\newcommand{\mono}{{Straight Barb[reversed,length=2mm,width=2mm]}-{Straight Barb[length=2mm,width=2mm]}}
\newcommand{\epi}{-Straight Barb[length=2mm,width=2mm] Straight Barb[length=2mm,width=2mm]}
\newcommand{\normalmono}{{Triangle[open,reversed,length=2mm,width=2mm]}-{Straight Barb[length=2mm,width=2mm]}}
\newcommand{\normalepi}{-{Triangle[open,length=2mm,width=2mm]}}
\newcommand{\normaldemimono}{{Triangle[open,reversed,length=2mm,width=2mm]}-}

\newcommand{\C}{\mathcal{C}}
\newcommand{\N}{\mathcal{N}}
\newcommand{\E}{\mathcal{E}}
\newcommand{\Z}{\mathbb{Z}}

\newcommand{\noproof}{\hfill$\square$}

\newcommand{\ie}{\textit{i.e.}\ }

\newcommand{\Def}[1]{\textit{\textbf{#1}}}

\theoremstyle{plain}
\newtheorem{proposition}{Proposition}[section]

\newtheorem{theorem}[proposition]{Theorem}
\newtheorem{lemma}[proposition]{Lemma}
\newtheorem*{theorem*}{Theorem}

\theoremstyle{definition}
\newtheorem{definition}[proposition]{Definition}

\theoremstyle{remark}

\title{Spectral Sequences in Semi-Abelian Categories}

\author{Florent Afsa}

\subjclass[2020]{18E13, 18G40, 18G50, 18N50}

\keywords{Semi-abelian category; spectral sequence; simplicial object, exact couple}

\address{Institut de Recherche en Math\'ematique et Physique, Universit\'e catholique de Louvain, Chemin du Cyclotron 2, 1348 Louvain-la-Neuve, Belgium}

\email{florent.afsa@uclouvain.be}

\begin{document}

\begin{abstract}
	In this paper, we define the notion of a spectral sequence in the context of semi-abelian categories in the sense of Janelidze, Márki and Tholen. We show that from a double simplicial object, one can construct an exact couple, which gives rise to a spectral sequence. This extends Quillen's result on double simplicial groups to every semi-abelian category.
\end{abstract}

\maketitle

\section*{Introduction}

In 1965, Quillen proved in \cite{DGQlln1966} that for a double semi-simplicial group $G$, there are  spectral sequences
\begin{align*}
	E^2_{p,q}=H_p^hH_q^vG\Rightarrow H_{p+q}(\Delta G) \\
	E^2_{p,q}=H_p^vH_q^hG\Rightarrow H_{p+q}(\Delta G)
\end{align*}
This was already known in the case of abelian groups \cite[Satz 2.9]{DoldPuppe}. The main result of our current paper is Theorem \ref{main theorem}, which extends Quillen's result to every semi-abelian category \cite{Janelidze-Marki-Tholen} (but only for double simplicial objects). Semi-abelian categories are pointed categories which are exact in the sense of Barr \cite{Barr}, protomodular in the sense of Bourn \cite{Bourn:protomodular}, and admit binary coproducts. These satisfy some algebraic properties and enable one to do homological algebra in a wider context than abelian categories. Some examples are:
\begin{itemize}
	\item Every abelian category;
	\item The category of groups;
	\item The category of Lie algebras over any ring;
	\item The category of rings with or without unit;
	\item The category of crossed modules;
	\item Actually, any variety of $\Omega$-groups \cite{Higgins}, \ie variety of universal algebras with underlying group structure such that the trivial subgroup is a subalgebra, which includes the four previous examples;
	\item The category of cocommutative Hopf algebras over a field of characteristic $0$ \cite{GranKadjoVercruysse}.
\end{itemize}
There is a direct application in \cite{AfsaDonadzeVdLinden}, where the authors explore sufficient conditions for higher \v{C}ech derived functors to coincide with ordinary derived functors. Our Theorem \ref{main theorem} is used there to prove a categorical version of \textit{Loday's theorem} - essentially  Proposition 3.4 in \cite{Loday}, of which an algebraic version is Proposition 14 of \cite{DonadzeInaPorter} - stated as follows:
\begin{theorem*}\cite{AfsaDonadzeVdLinden}
	In a semi-abelian category, given any internal $(n+1)$-fold groupoid $G$ of which the projections in direction $i\in k\leqslant n+1$ are written $d_0^i$, $d_1^i\colon G_k\rightarrow G_{k\backslash\{i\}}$, we have
	\[
		H_{n+1}\Delta(\mathrm{Ner}^{n+1}(G))=\underset{i\in n+1}{\bigcap}\big(\mathrm{Ker}(d_0^i)\cap\mathrm{Ker}(d_1^i)\big)
	\]
	and $H_j\Delta(\mathrm{Ner}^{n+1}(G))=0$ for all $j\geqslant n+2$.\noproof
\end{theorem*}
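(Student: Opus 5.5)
The plan is to obtain the theorem from the spectral sequence of Theorem \ref{main theorem}, applied to the double simplicial object obtained from the $(n+1)$-fold nerve. I proceed by induction on $n$.

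\textbf{Setting up.} Write $\mathrm{Ner}^{n+1}(G)$ as a double simplicial object: one simplicial direction comes from the first groupoid direction $i=0$, and the other from the diagonal of the remaining $n$ directions. The total diagonal of this double simplicial object is $\Delta(\mathrm{Ner}^{n+1}(G))$. Theorem \ref{main theorem} then gives a spectral sequence
\[
E^2_{p,q}=H^h_pH^v_q\Rightarrow H_{p+q}\Delta(\mathrm{Ner}^{n+1}(G)).
\]

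\textbf{Base case.} For $n=0$ the statement concerns the nerve of a single internal groupoid $G_1\rightrightarrows G_0$. Its normalized Moore complex is concentrated in degrees $0$ and $1$. In degree $1$ it is $\mathrm{Ker}(d_0)$, and the degree-$2$ term vanishes because the nerve of a groupoid is $2$-coskeletal and all of its horns fill uniquely. Hence $H_1=\mathrm{Ker}(d_0)\cap\mathrm{Ker}(d_1)$ and $H_j=0$ for $j\geqslant 2$.

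\textbf{Inductive step.} Fix a degree $m$ in the direction-$0$ simplicial variable. The object at that level, $G^{(m)}$, is an internal $n$-fold groupoid, and by induction its diagonal homology is concentrated in degrees $\leqslant n$, with top degree equal to the intersection of kernels over the remaining $n$ directions. Next, in the direction-$0$ variable, the top-degree homology objects assemble into the nerve of an internal groupoid in direction $0$. The reason is that in a semi-abelian category intersections of kernels commute with the limits that define the nerve. By the base case, this nerve has homology $\mathrm{Ker}(d^0_0)\cap\mathrm{Ker}(d^0_1)$ in degree $1$ and vanishes above degree $1$. Putting these together, $E^2_{p,q}=0$ whenever $q>n$ or $p>1$. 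The spectral sequence therefore has only the columns $p=0,1$ and the rows $q\leqslant n$. So $H_j\Delta=0$ for $j\geqslant n+2$, and $H_{n+1}\Delta\cong E^2_{1,n}$, which is the full intersection of kernels. This last identification needs $E^2_{1,n}=E^\infty_{1,n}$, which holds because every differential out of or into that spot leaves the two-column band. It also needs the filtration on $H_{n+1}$ to have a single nonzero graded piece.

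\textbf{Main obstacle.} The inductive step hides a difficulty: the lower rows $q<n$ are not known to vanish. Only the vanishing of $p>1$ for every $q$, together with the vanishing of $q>n$, is really needed. The $p>1$ vanishing requires every $H_q$ in the direction-$0$ variable to form the nerve of a groupoid. Since homology is not in general exact in the semi-abelian setting, this must be checked carefully. My first attempt would be to show that each homology object is a quotient of a groupoid nerve by a normal sub-groupoid, and that such a quotient is again a groupoid nerve, via a Kan-type property. I would also verify the convergence and edge identification in the nonabelian setting using the exact couple constructed in the paper.
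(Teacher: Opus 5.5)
The paper does not prove this statement. It is quoted from \cite{AfsaDonadzeVdLinden}, with only the remark that Theorem~\ref{main theorem} is used there. Your architecture fits that remark: induction on $n$, splitting off direction $0$, a base case from the length-one Moore complex of a groupoid nerve, and collapse onto $E^2_{1,n}$. However, the step you call the main obstacle is the real content of the inductive step, and you leave it open. You need $E^2_{p,q}=H^h_pH^v_q=0$ for all $p\geqslant2$ and all $q<n$. Without it, the lower rows can contribute to $H_j$ for $j\geqslant n+2$. They can also give graded pieces $E^\infty_{p,n+1-p}$, $p\geqslant 2$, of $H_{n+1}$, and carry differentials $d^r\colon E^r_{1+r,n-r+1}\to E^r_{1,n}$. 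So even your claim $E^\infty_{1,n}=E^2_{1,n}$ rests on this vanishing. The route you sketch does not settle it. Existence of horn fillers cannot be the point, since every simplicial object in a regular Mal'tsev category, in particular in $\C$, is already Kan. What characterizes groupoid nerves is uniqueness of fillers, i.e.\ the Segal condition. You give no reason why levelwise images $I(\partial_{q+1})$ and levelwise cokernels should preserve it.

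The missing lemma is that each $H_q$, including $q=0$, preserves pullbacks along split epimorphisms of simplicial objects. Let $f\colon A\to B$ be a split epimorphism with kernel $K$. Then $H_{q+1}(f)$ is a split epimorphism, so all connecting morphisms in the long exact sequence of \cite[Proposition 2.4]{EveraertVdLinden} vanish, and $H_q(K)\cong\mathrm{Ker}\,H_q(f)$. For any $g\colon C\to B$, the projection $A\times_BC\to C$ is split with kernel $K$. Likewise $H_q(A)\times_{H_q(B)}H_q(C)\to H_q(C)$ is split with kernel $\mathrm{Ker}\,H_q(f)$. The comparison map respects kernels and sections and is an isomorphism on kernels and on the base $H_q(C)$, so the split short five lemma makes it an isomorphism.

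Now $\Delta\mathrm{Ner}^n$ preserves limits, and level $m$ of the direction-$0$ nerve is $X_{m-1}\times_{X_0}X_1$ with $X_1\to X_0$ split by $s_0$. Hence each $H^v_q$ satisfies the Segal condition, so it is the nerve of an internal category, hence of an internal groupoid since $\C$ is Mal'tsev. Your base case then gives $H^h_pH^v_q=0$ for $p\geqslant 2$.

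Two smaller points should also be made explicit.
\begin{itemize}
\item The degree-$n$ isomorphism from the induction must be natural in the $n$-fold groupoid. It is, since the exact couple is functorial, and this is what makes $H^v_n$ the nerve of the groupoid formed by $\bigcap_{i=1}^n(\mathrm{Ker}\,d^i_0\cap\mathrm{Ker}\,d^i_1)$ in $G_{n+1}$ and in $G_{n+1\backslash\{0\}}$.
\item Vanishing graded pieces force $F_{p-1}H_{n+1}=F_pH_{n+1}$ only because these subobjects are normal. That holds because $H_{n+1}$ is abelian (Theorem~\ref{theorem abelian homology object} and Proposition~\ref{prop sub abelian normal}); in a general semi-abelian category a proper subobject can have zero cokernel.
\end{itemize}
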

This allows the authors of \cite{AfsaDonadzeVdLinden} to obtain a Hopf formula for \v{C}ech derived functors.

Towards Theorem \ref{main theorem}, in Section \ref{section spectral sequences} we formalize the notion of a spectral sequence in the context of semi-abelian categories. Spectral sequences, introduced by Leray in \cite{Leray} for modules, help to understand the homology of a chain complex equipped with some additional structure, such as a filtration or a grading. A spectral sequence is a sequence of bigraded objects $(E^r)_r$ together with morphisms $d^r$ at each level, called \textit{differentials}, such that the homology of $E^r$ is isomorphic to $E^{r+1}$. One of the theory's main tools is the concept of an exact couple (Definition \ref{def exact couple}), originally introduced by Massey in \cite{Massey}. An exact couple is the data of two bigraded objects $D$ and $E$ and morphisms between them. This gives rise to a spectral sequence in which the first page is $E$.

The definitions and constructions we make here are not new, and have been considered in diverse contexts in the literature. For instance, spectral sequences and exact couples are defined in \cite[3.5]{Grandis} with different conventions and hypotheses in the context of \textit{Grandis homological categories}\footnote{Grandis homological categories are not to be confused with the homological categories of Borceux and Bourn \cite{Borceux-Bourn}.} \cite[1.3.6]{Grandis}, which are not necessarily pointed, but such that any composite of normal monomorphisms is again normal, which excludes the category of all groups. Our context of semi-abelian categories does not immediately compare to that one; some properties which hold in the abelian situation but not here make the setup more delicate. In particular, we observe that the construction of a spectral sequence from an exact couple is not automatic any more, and requires the hypothesis of the normality of the differentials.

Next, we construct an exact couple out of any double simplicial object. This follows Quillen's construction almost identically, except for Proposition \ref{prop homology EG}. To this end, we use the Moore complex (Definition \ref{def Moore complex}) of a simplicial object. The well know fact that the homology of a simplicial object is abelian (in all degrees except in degree $0$), recalled in Theorem \ref{theorem abelian homology object}, guarantees the needed normality of the differentials.\\

In the following, we let $\C$ denote a semi-abelian category \cite{Janelidze-Marki-Tholen}. Let $f\colon X\rightarrow Y$ be a morphism in $\C$. We write $f\colon X\normono Y$ (respectively $f\colon X\norepi Y$) if $f$ is a normal monomorphism, \ie a kernel of some morphism (respectively a normal epimorphism, \ie a cokernel of some morphism). We write $K(f)$ (respectively $C(f)$, $I(f)$) for a chosen representative of the kernel (respectively the cokernel, the image) of $f$, and $\ker(f)$ (respectively $\coker(f)$, $\im(f)$) for the morphism $K(f)\normono X$ (respectively $Y\norepi C(f)$, $I(f)\rightarrowtail Y$). $K$, $C$ and $I$ are functors from the category of arrows of $\C$ to the category $\C$, and $\ker$, $\coker$ and $\im$ are endofunctors of the category of arrows of $\C$. A morphism $f$ is called \Def{normal} if it factors as $f=me$ where $m$  is a normal monomorphism and $e$ is a normal epimorphism. We recall the fact that for a morphism $f$ in $\C$, $I(f)=C(\ker(f))$, and whenever $f$ is normal, $I(f)=K(\coker(f))$.

\section{Spectral Sequences}\label{section spectral sequences}

Spectral sequences have been introduced by Jean Leray in \cite{Leray} to compute sheaf cohomology. They have afterwards become a standard tool in homological algebra, and subsequently the basics of this theory have been grounded in the framework of abelian categories. In this section, we define spectral sequences in the context of semi-abelian categories, as well as one of its basic tools, namely exact couples, introduced by Massey in \cite{Massey}. Instead of the definition of a spectral sequence using morphisms with bidegrees (such as in \cite{Grandis}), we prefer to involve shift functors so we can stay in a pointed category.

\begin{definition}
	Let $\C$ be a semi-abelian category. A \Def{bigraded object} $E$ is a functor from the discrete category $\Z\times\Z$ to $\C$, \ie an indexed set of objects $E_{p,q}$ of $\C$ with $p$, $q\in\Z$. We obtain the category $\C^{\Z\times\Z}$ of bigraded objects which is again semi-abelian (as is any category of functors with a semi-abelian codomain).
\end{definition}

We define automorphisms $S$, $T\colon \C^{\Z\times\Z}\rightarrow\C^{\Z\times\Z}$ of the category of bigraded objects by setting $(SE)_{p,q}=E_{p-1,q+1}$ and $(TE)_{p,q}=E_{p,q-1}$ for each bigraded object $E$. Clearly, $S$ and $T$ commute. Since limits and colimits in functor categories are computed pointwise, these also commute with the functors $K$, $C$ and $I$, in the sense that for a morphism $f$, we have $K(Sf)=SK(f)$, for instance.

\begin{definition}
	Given an integer $a\geqslant 0$, a \Def{spectral sequence} $(E,d)$ (starting with $E^a$) consists of a sequence of bigraded objects $(E^r)_{r\geqslant a}$ together with morphisms $d^r\colon S^{-r}T^{-1}E^r\rightarrow E^r$. These have to satisfy the following conditions:
	\begin{itemize}
		\item $(S^rTd^r)d^r\colon S^{-r}T^{-1}E^r\longrightarrow S^rTE^r$ is the zero morphism, which gives a map $I(d^r)\longrightarrow K(S^rTd^r)$;
		\item $E^{r+1}\cong C\big(I(d^r)\longrightarrow K(S^rTd^r)\big)$; the differential of $E^{r+1}$ is therefore $d^{r+1}\colon S^{-r-1}T^{-1}E^{r+1}\rightarrow E^{r+1}$.
	\end{itemize}
	The object $E^r$ is called the $r$-th \Def{page} of $E$. The maps $d^r$ are called \Def{differentials}, and are componentwise $d^r_{p,q}\colon E^r_{p+r,q-r+1}\rightarrow E^r_{p,q}$.
\end{definition}

A spectral sequence starting with $E^a$ is \Def{bounded} whenever for each integer $n$ there are finitely many non-zero terms $E^a_{p,q}$ with $p+q=n$. When this happens, there exists $r_0\geqslant a$ such that for each $r\geqslant r_0$, $d^r_{p,q}=(S^rTd^r)_{p,q}=0$, and thus $E^r_{p,q}\cong E^{r_0}_{p,q}$ ($r^0$ depends of $p$ and $q$). We write $E^\infty_{p,q}$ for this stable value, and the resulting bigraded object $E^\infty$ is called the \Def{limit page} of the spectral sequence $(E,d)$.

\begin{definition}
	Let $(H_n)_{n\geqslant0}$ be a family of objects of $\C$. We say that a bounded spectral sequence $E$ \Def{converges} to $H_\ast$ if for each $n$ there is a finite filtration of $H_n$
	\[
		0=F_sH_n\leqslant\dots F_pH_n\leqslant F_{p+1}H_n\leqslant\dots\leqslant F_tH_n=H_n
	\]
	and isomorphisms $C(F_{p-1}H_n\rightarrow F_pH_n)\cong E^\infty_{p,q}$ where $p+q=n$. This is usually written as follows:
	\[
		E^a_{p,q}\Rightarrow H_{p+q}
	\]
\end{definition}

\begin{proposition}\label{prop Br Zr}
	Let $(E,d)$ be a spectral sequence. There exist two sequences of bigraded objects
	\[
		0=B^a\leqslant B^{a+1}\leqslant\dots\leqslant B^r\leqslant\dots\leqslant Z^r\leqslant\dots\leqslant Z^{a+1}\leqslant Z^a=E^a
	\]
	such that $E^r\cong C(B^r\rightarrow Z^r)$ for each $r\geqslant a$.
\end{proposition}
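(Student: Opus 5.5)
Following the abelian pattern, I would build $B^r$ and $Z^r$ inductively as subobjects of $E^a$, pulling the cycles and boundaries of each page back along the quotient maps. Set $B^a=0$ and $Z^a=E^a$; then $E^a\cong C(0\to E^a)$. Suppose $B^r\leqslant Z^r\leqslant E^a$ are constructed with $B^r\normono Z^r$ normal and $q_r\colon Z^r\norepi E^r$ a cokernel of $B^r\to Z^r$. The differential gives $K(S^rTd^r)\normono E^r$ and $I(d^r)\rightarrowtail K(S^rTd^r)$. By definition of a spectral sequence, the latter has cokernel $E^{r+1}$, so its image is normal in $K(S^rTd^r)$. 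Define $Z^{r+1}$ as the pullback of $K(S^rTd^r)\normono E^r$ along $q_r$, and $B^{r+1}$ as the pullback of $I(d^r)\rightarrowtail E^r$ along $q_r$. Both are subobjects of $Z^r$, hence of $E^a$. Since $0\leqslant I(d^r)\leqslant K(S^rTd^r)$, pulling back gives
\[
B^r=q_r^{-1}(0)\leqslant B^{r+1}\leqslant Z^{r+1}\leqslant Z^r.
\]
Here $q_r^{-1}(0)=K(q_r)=B^r$ uses that $q_r$ is the cokernel of the normal mono $B^r\to Z^r$, so in a semi-abelian category its kernel is $B^r$. This yields the required chain of inequalities, with $Z^{r+1}\leqslant Z^r$ and $B^r\leqslant B^{r+1}$. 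Everything is computed pointwise in $\C^{\Z\times\Z}$, which is semi-abelian, so no shifts are needed beyond those already in the data.

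Next I would identify $E^{r+1}$. The restriction $q'\colon Z^{r+1}\to K(S^rTd^r)$ of $q_r$ is a pullback of the regular epimorphism $q_r$. In a regular category regular epis are pullback-stable, so $q'$ is a normal epi with kernel $K(q_r)=B^r$. Restricting $q'$ further over $I(d^r)$ gives a regular epi $B^{r+1}\to I(d^r)$. Now compose
\[
Z^{r+1}\xrightarrow{\,q'\,}K(S^rTd^r)\norepi C\big(I(d^r)\to K(S^rTd^r)\big)\cong E^{r+1}.
\]
This is a composite of regular epis, hence a regular (normal) epi $q_{r+1}$. Its kernel is the preimage under $q'$ of $I(d^r)$, namely $B^{r+1}$. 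So $B^{r+1}\normono Z^{r+1}$ is normal, being a kernel, and $E^{r+1}\cong C(B^{r+1}\to Z^{r+1})$. This completes the induction.

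The main obstacle I expect is exactly the semi-abelian subtlety in this last step. In the abelian case every subobject is normal and every cokernel has the expected kernel; here I need two facts. First, the kernel of the composite normal epi is the preimage, which follows from the pullback description of kernels. Second, $B^{r+1}$ is normal in $Z^{r+1}$, which follows because it arises as a kernel. I must also check that $B^r\to Z^r$ being normal makes $K(\coker)=B^r$; this holds since normal monos are kernels of their cokernels. Note that $B^{r+1}$ need not be normal in $E^a$, which is why the statement only claims the order relations $\leqslant$ and the cokernel identification relative to $Z^r$.
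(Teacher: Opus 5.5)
Your construction of $Z^{r+1}$ and $B^{r+1}$ is the paper's: pulling $I(d^r)\rightarrowtail E^r$ back along $q_r$ agrees, by pasting, with the paper's pullback along $Z^{r+1}\norepi K(S^rTd^r)$. Your chain of inclusions is also fine. The gap is in the identification of $E^{r+1}$.

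The inference ``the latter has cokernel $E^{r+1}$, so its image is normal in $K(S^rTd^r)$'' is a non sequitur. Every morphism of $\C$ has a cokernel, and the definition of a spectral sequence puts no normality condition on $d^r$; the paper treats normality of the differentials as an extra hypothesis, imposed only when deriving exact couples. The claim is false in general. In groups, take $E^a_{0,0}=S_3$, $E^a_{a,1-a}=\Z/2$, all other terms $0$, and $d^a_{0,0}$ sending the generator to $(12)$. Then:
\begin{itemize}
\item $Z^{a+1}_{0,0}=K(S^aTd^a)_{0,0}=S_3$;
\item $B^{a+1}_{0,0}=I(d^a)_{0,0}=\langle(12)\rangle$, which is not normal in $S_3$;
\item $E^{a+1}_{0,0}=0$, since the normal closure of a transposition in $S_3$ is $S_3$.
\end{itemize}
So $\ker(q_{a+1})$ is all of $S_3$, not $B^{a+1}_{0,0}$. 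Hence your claims that the kernel of $q_{r+1}$ is $B^{r+1}$, and that $B^{r+1}\normono Z^{r+1}$ is normal ``being a kernel'', both fail. Your strengthened induction hypothesis (normality of $B^r$ in $Z^r$, used to get $K(q_r)=B^r$) therefore cannot be propagated.

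The statement survives, but you must avoid computing kernels. Drop normality from the induction hypothesis; for the ordering you only need $B^r\leqslant K(q_r)=q_r^{-1}(0)\leqslant q_r^{-1}(I(d^r))=B^{r+1}$. Let $c$ be the cokernel of $\iota\colon I(d^r)\rightarrowtail K(S^rTd^r)$, let $j\colon B^{r+1}\rightarrowtail Z^{r+1}$, and let $e\colon B^{r+1}\to I(d^r)$ be the restriction of $q'$. Then check directly that $q_{r+1}:=c\,q'$ is a cokernel of $j$:
\begin{itemize}
\item It kills $j$, since $q'j=\iota e$ and $c\iota=0$.
\item If $\phi j=0$, then $\phi$ kills $K(q')=K(q_r)\leqslant B^{r+1}$. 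Since the normal epimorphism $q'$ is the cokernel of its kernel, $\phi=\psi q'$ for some $\psi$.
\item Then $\psi\iota e=\phi j=0$. As $e$ is an epimorphism (a pullback of $q_r$), $\psi\iota=0$, so $\psi$ factors through $c$.
\item Uniqueness holds because $q_{r+1}$ is an epimorphism.
\end{itemize}
This is exactly the paper's argument. The square formed by $j$, $\iota$, $e$, $q'$ is a pullback whose vertical sides are normal epimorphisms, hence a pushout by Lemma~\ref{lemma pb with normal epis}. Lemma~\ref{lemma cokernel of pushout} then identifies $C(j)$ with $C(\iota)\cong E^{r+1}$, with no normality of $B^{r+1}$ in $Z^{r+1}$ needed.
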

\begin{proof}
	Let $(E,d)$ be a spectral sequence. We define the two sequences inductively by setting $B^a=0$, $Z^a=E^a$ and the pullbacks
	\begin{equation}\label{diag Br Zr}
		\begin{tikzpicture}[baseline=(I)]
			\node(A) at (0,0){$B^{r+1}$};
			\node(B) at (4,0){$Z^{r+1}$};
			\node(C) at (8,0){$Z^r$};
			\node(D) at (0,-2){$I(d^r)$};
			\node(E) at (4,-2){$S^rTK(d^r)$};
			\node(F) at (8,-2){$E^r$};

			\node(G) at (0.5,-0.5){$\pullback$};
			\node(H) at (4.5,-0.5){$\pullback$};

			\node(I) at (-1,-1.1){};

			\draw[\mono] (A)--(B);
			\draw[\normalmono] (B)--(C);
			\draw[\normalepi] (A)--(D);
			\draw[\normalepi] (B)--(E);
			\draw[\normalepi] (C)--(F);
			\draw[\mono] (D)--(E);
			\draw[\normalmono] (E)--(F);
		\end{tikzpicture}
	\end{equation}
	The map $Z^r\norepi E^r$ is the cokernel of $B^r\rightarrow Z^r$. Since the left-hand square is a pullback, it is also a pushout by Lemma \ref{lemma pb with normal epis}. Therefore by Lemma \ref{lemma cokernel of pushout}, the cokernel $C\big(B^{r+1}\rightarrow Z^{r+1}\big)$ is isomorphic to $C\big(I(d^r)\longrightarrow S^rTK(d^r)\big)$, which is itself isomorphic to $E^{r+1}$.
\end{proof}

Whenever they exist, we define
\[
	B^\infty\coloneq\underset{r\geqslant a}{\bigcup}B^r \textrm{ and } Z^\infty\coloneq\underset{r\geqslant a}{\bigcap}Z^r
\]
If $(E,d)$ is bounded, then $(S^rTK(d^r))_{p,q}=E^r_{p,q}$ and $I(d^r)_{p,q}=0$ for large $r$, so $B^\infty_{p,q}=B^r_{p,q}$ and $Z^\infty_{p,q}=Z^r_{p,q}$. Thus we obtain $E^\infty\cong C(B^\infty\rightarrow Z^\infty)$.

\begin{definition}\label{def exact couple}
	An \Def{exact couple} $\E=(D,E,f,g,h)$ of degree $a$ in $\C$ consists of the following data:
	\begin{itemize}
		\item two bigraded objects $D$ and $E$ of $\C$;
		\item a morphism $f\colon SD\rightarrow D$;
		\item a morphism $g\colon D\rightarrow E$;
		\item a morphism $h\colon S^{-a}T^{-1}E\rightarrow D$.
	\end{itemize}
	These are required to fit into a long exact sequence
	\[
		\cdots\rightarrow S^{-a}T^{-1}D\xrightarrow{S^{-a}T^{-1}g}S^{-a}T^{-1}E\xrightarrow{h} D\xrightarrow{S^{-1}f} S^{-1}D\xrightarrow{S^{-1}g} S^{-1}E\rightarrow\cdots
	\]
	which is componentwise
	\[
		\cdots\rightarrow D_{p+a,q-a+1}\xrightarrow{g_{p+a,q-a+1}}E_{p+a,q-a+1}\xrightarrow{h_{p,q}} D_{p,q}\xrightarrow{f_{p+1,q-1}}D_{p+1,q-1}\xrightarrow{g_{p+1,q-1}}\cdots
	\]
	This is usually written as
	\[
		\begin{tikzpicture}
			\node(A) at (0,0){$D$};
			\node(B) at (3,0){$D$};
			\node(C) at (1.5,-2){$E$};

			\draw[\map] (A)--(B) node[above,midway] {$f$};
			\draw[\map] (B)--(C) node[below right,midway] {$g$};
			\draw[\map] (C)--(A) node[below left,midway] {$h$};
		\end{tikzpicture}
	\]
	(Here, a sequence of morphisms $X\overset{f}{\rightarrow}Y\overset{g}{\rightarrow}Z$ is \Def{exact} if $f$ factors through the kernel of $g$, and $g$ factors through the cokernel of $f$.)
\end{definition}

Marco Grandis already defines exact couples, but slightly differently than us \cite[Definition 3.5]{Grandis}. He does this in the context of \textit{Grandis homological categories} \cite[1.3.6]{Grandis}. A category is \Def{Grandis homological} if it has a closed ideal of null morphisms $\N$, admits kernels and cokernels with respect to $\N$, and is such that normal monomorphisms and normal epimorphisms are closed under composition, and for any normal monomorphism $m$ followed by a normal epimorphism $e$ such that $\ker e\leqslant m$, the composite $em$ is normal (here normality is defined with respect to $\N$). The pointed case where $\N$ consists of all zero maps almost comprises semi-abelianness, except that the latter does not require that normal monomorphisms compose. In \cite{hslat}, the condition that any composite of two normal monomorphisms is a normal monomorphism is called \Def{transitivity of normality}, and is explored in the framework of semi-abelian categories. In contrast, normal epimorphisms always compose in a semi-abelian category. The following theorem allows us to construct a spectral sequence from an exact couple, such as in the usual theory. However, the normality of the differential in every page is required. This morphism being defined as a composite of normal morphisms, its normality is automatic in the abelian case or in the setting of Grandis, but not in the semi-abelian case.

\begin{theorem}
	Given an exact couple $\E$ of degree $a$ as in Definition \ref{def exact couple}, we define the following data:
	\begin{itemize}
		\item $d\coloneq gh\colon S^{-a}T^{-1}E\rightarrow E$;
		\item $D'\coloneq S^{-1}I(f)$;
		\item $E'\coloneq C\big(I(d)\longrightarrow K(S^aTd)\big)$.
	\end{itemize}
	We assume that $d$ is a normal morphism. Then there exist morphisms $f'\colon SD'\rightarrow D'$, $g'\colon D'\rightarrow E'$, $h'\colon S^{-(a+1)}T^{-1}E'\rightarrow D'$ which fit into an exact couple $\E'$ of degree $a+1$
	\[
		\begin{tikzpicture}
			\node(A) at (0,0){$D'$};
			\node(B) at (3,0){$D'$};
			\node(C) at (1.5,-2){$E'$};

			\draw[\map] (A)--(B) node[above,midway] {$f'$};
			\draw[\map] (B)--(C) node[below right,midway] {$g'$};
			\draw[\map] (C)--(A) node[below left,midway] {$h'$};
		\end{tikzpicture}
	\]
	$\E'$ is called the \Def{derived exact couple} of $\E$.
\end{theorem}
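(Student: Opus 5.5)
The plan follows the classical construction of the derived couple, with each abelian step replaced by its semi-abelian analogue. Throughout we use that $S$ and $T$ commute with $K$, $C$ and $I$, and we check exactness pointwise.

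\emph{Definition of $f'$.} The morphism $f\colon SD\to D$ factors through its image, so $S$ applied to $I(f)\rightarrowtail D$ composed with $f$ gives a morphism $SI(f)\to I(f)$. Unwinding the shifts, we define $f'\colon SD'=I(f)\to S^{-1}I(f)=D'$ as the restriction of $S^{-1}f$ to $I(f)$, corestricted to $S^{-1}I(f)$. In other words $f'$ is the induced map between images.

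\emph{Definition of $g'$.} We define $g'$ by lifting. An element of $D'=S^{-1}I(f)$ is written $f(x)$, and $g'$ sends it to the class of $g(x)$. Categorically, $S^{-1}f$ corestricts to a normal epimorphism $D\norepi S^{-1}I(f)$, because images are quotients by kernels. We check two things:
\begin{itemize}
\item $g$ lands in $K(S^aTd)$, since $S^aTd\cdot g=(S^aTg)(S^aTh)g=0$ by exactness $hg=0$ (suitably shifted);
\item the kernel of $D\norepi D'$ is $K(S^{-1}f)$, which equals $I(h)$ by exactness. This kernel is sent by $g$ into $I(gh)=I(d)$.
\end{itemize}
Since $K(S^aTd)\to E'$ is a cokernel of $I(d)\to K(S^aTd)$, the composite kills $\ker(D\norepi D')$, so it factors uniquely through the normal epimorphism $D\norepi D'$ (up to the correct shift). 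This defines $g'$.

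\emph{Definition of $h'$.} We restrict $h$ to $K(S^aTd)$, shifted appropriately. Since $d=gh$, the restriction of $h$ to $K(d)$ lands in $K(g)=I(f)$ up to shift, that is, in $D'$. This restriction kills $I(d)$: indeed $h\circ gh$ factors through $hg=0$. So $h|_{K(d)}$ factors through the cokernel $E'$. The normality of $d$ enters here. It guarantees that $I(d)=K(\coker d)$ is a normal subobject, so $I(d)\to K(S^aTd)$ is a normal monomorphism, $E'$ really is its quotient, and the later exactness computations on $E'$ are valid. We expect to need the standard semi-abelian facts (Noether isomorphism theorems and the $3\times3$ lemma) to identify kernels and cokernels in $E'$.

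\emph{Exactness at the three positions.} This is the main obstacle: we must prove $\ker=\mathrm{im}$ in the strong sense of the paper's definition, where the first map factors through the kernel of the second and the second through the cokernel of the first. In a semi-abelian category, exactness $X\to Y\to Z$ is equivalent to $I(X\to Y)=K(Y\to Z)$ together with normality of the second map's image factorisation. We do not have element chasing, so each classical chase must be done with the pullback/pushout lemmas of Section 1 (Lemma \ref{lemma pb with normal epis} and Lemma \ref{lemma cokernel of pushout}) together with the fact that regular images are stable under pullback along the normal epimorphism $D\norepi D'$.

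\begin{itemize}
\item \emph{At $D'$ (between $h'$ and $f'$).} We show $K(f')=I(f)\cap K(S^{-1}f)=I(f)\cap I(h)$ (shifted) and that this is the image of $h'$. This holds because $h(z)\in I(f)$ exactly when $gh(z)=0$, i.e.\ $z\in K(d)$, which is a pullback argument.
\item \emph{At $D'$ (between $f'$ and $g'$).} We show $K(g')=I(f')$. If $g(x)\in I(d)=g(I(h))$, then $x\in I(h)+K(g)$, i.e.\ $x\in I(h)\vee I(f)$. We realise this semi-abelianly as a join of normal subobjects, computed by pulling back along $g$ and using that images of normal subobjects under a regular epimorphism are normal.
\item \emph{At $E'$.} We show $K(h')=I(g')$. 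If $h(z)=f(y)$ with $f(y)\in K(f)$, then $z$ differs from $g$ of something by an element of $K(h)=I(g)$, modulo $I(d)$. We deduce this from the diagram \eqref{diag Br Zr} pattern, namely pullbacks of normal epimorphisms being pushouts.
\end{itemize}

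Finally, the degree bookkeeping confirms that $h'$ has source $S^{-(a+1)}T^{-1}E'$: the extra $S^{-1}$ comes from $D'=S^{-1}I(f)$. The resulting couple therefore has degree $a+1$.
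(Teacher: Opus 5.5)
Your constructions of $f'$, $g'$ and $h'$ are the paper's. You take $f'=(S^{-1}p_f)m_f$. You induce $g'$ on $D'=C(m_h)$ from $q\bar g$, since $\bar g m_h$ lands in $I(d)$. You induce $h'$ on $E'$ from the restriction $\bar h\colon K(d)\to I(f)=SD'$ of $h$. You also use normality of $d$ where it is needed, to make $I(d)\normono K(S^aTd)$ normal so that $K(q)=I(d)$. For exactness your route differs from the paper's. The paper proves $f'$ and $g'$ normal via di-exactness and stability of normal maps under pushout along normal epimorphisms. It then identifies $I(g')=C(f')$, $I(h')=S^{-1}K(f')$ and $C(g')=S^{a+1}TI(h')$ by pullback/pushout recognition. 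You instead aim for image $=$ kernel at each vertex using direct and inverse images. That route is viable, and it makes normality of $f',g',h'$ automatic because every image becomes a kernel. However, the semi-abelian translation, which is the real content here, is not supplied, and at one vertex the justification you give would not produce what is needed.

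The gap is at the middle vertex. Your step ``$g(x)\in g(I(h))$ implies $x\in I(h)+K(g)$'' is the identity $g^{-1}\big(g(A)\big)=A\vee K(g)$, and this is not formal. In pointed sets, take $u\colon\{0,a,b\}\to\{0,x\}$ with $u(a)=u(b)=x$ and $A=\{0,a\}$. Then $u^{-1}u(A)=\{0,a,b\}\neq\{0,a\}=A\vee K(u)$. You invoke normality of joins and of direct images of normal subobjects, but that concerns normality, not this equality. What is needed is protomodularity. Write $g=m_gp_g$; then both $A\vee K(p_g)$ and $p_g^{-1}p_g(A)$ are extensions of $p_g(A)$ by $K(p_g)$, so the short five lemma makes the inclusion an isomorphism. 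With this, $(S^{-1}p_f)^{-1}K(g')=K(q\bar g)=g^{-1}(I(d))=g^{-1}g(I(h))=I(h)\vee I(f)$. Now take direct images along the regular epimorphism $S^{-1}p_f$, which preserves joins and kills $I(h)=K(S^{-1}p_f)$. This gives $K(g')=I(f')$.

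Your chase at $E'$ is also muddled (``$f(y)\in K(f)$'', ``differs \dots\ modulo $I(d)$''); no correction term arises. Since $K(h)\leqslant K(d)$, one has $K(\bar h)=K(h)$. Then $(S^{a+1}Th')q=S^aT\bar h$ gives $q^{-1}K(S^{a+1}Th')=S^aTK(h)=I(g)$. Hence $K(S^{a+1}Th')=q(I(g))=I(q\bar g)=I(g')$, because $q$ and $S^{-1}p_f$ are regular epimorphisms. The vertex between $h'$ and $f'$ is fine once you state the fact you are using: $I(\bar h)=h\big(h^{-1}I(f)\big)=I(h)\cap I(f)=K(f')$, which is pullback-stability of regular epimorphisms. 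A minor point: your characterization of exactness is slightly off. In a homological category $I(f)=K(g)$ already gives $C(f)\cong I(g)$; normality of the second map belongs to exactness at the next vertex.
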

\begin{proof}
	Since $(S^aTd)d=(S^aTg)(S^aTh)gh=0$, there is a factorization of $d$ as follows:
	\[
		\begin{tikzpicture}
			\node(A) at (0,0){$S^{-a}T^{-1}E$};
			\node(B) at (3,0){$E$};
			\node(C) at (0,-2){$I(d)$};
			\node(D) at (3,-2){$S^aTK(d)$};

			\draw[\map] (A)--(B) node[above,midway] {$d$};
			\draw[\normalmono] (C)--(D) node[below,midway] {$\bar{d}$};
			\draw[\normalepi] (A)--(C) node[left,midway] {$p_d$};
			\draw[\normalmono] (D)--(B) node[right,midway] {$k$};
		\end{tikzpicture}
	\]
	Therefore, we can define $E'=C(\bar{d})=C(\bar{d}p_d)$, and we denote $q\colon K(S^aTd)\norepi E'$. By definition of an exact couple, there are normal factorizations $f=m_fp_f$, $g=m_gp_g$, $h=m_hp_h$ and short exact sequences
	\[
		\begin{tikzpicture}
			\node(A) at (0,0){$I(g)$};
			\node(B) at (3,0){$E$};
			\node(C) at (6.33,0){$S^aTI(h)$};

			\draw[\normalmono] (A)--(B) node[above,midway] {$m_g$};
			\draw[\normalepi] (B)--(C) node[above,midway] {$S^aTp_h$};

			\node(D) at (0,-1){$I(h)$};
			\node(E) at (3,-1){$D$};
			\node(F) at (6.32,-1){$S^{-1}I(f)$};

			\draw[\normalmono] (D)--(E) node[above,midway] {$m_h$};
			\draw[\normalepi] (E)--(F) node[above,midway] {$S^{-1}p_f$};

			\node(G) at (0,-2){$I(f)$};
			\node(H) at (3,-2){$D$};
			\node(I) at (6,-2){$I(g)$};

			\draw[\normalmono] (G)--(H) node[above,midway] {$m_f$};
			\draw[\normalepi] (H)--(I) node[above,midway] {$p_g$};
		\end{tikzpicture}
	\]
	Let $f'\coloneq (S^{-1}p_f)m_f\colon SD'\rightarrow D'$. Since $m_f=\ker(g)$ and $S^{-a}T^{-1}k=\ker(gh)$, by Lemma \ref{lemma kernel of composite} we have a pullback square
	\[
		\begin{tikzpicture}
			\node(A) at (0,0){$K(d)$};
			\node(B) at (3,0){$SD'$};
			\node(C) at (0,-2){$S^{-a}T^{-1}E$};
			\node(D) at (3,-2){$D$};

			\node(E) at (0.5,-0.5){$\pullback$};

			\draw[\map] (A)--(B) node[above,midway] {$\bar{h}$};
			\draw[\map] (C)--(D) node[below,midway] {$h$};
			\draw[\normalmono] (A)--(C) node[left,midway] {$S^{-a}T^{-1}k$};
			\draw[\normalmono] (B)--(D) node[right,midway] {$m_f$};
		\end{tikzpicture}
	\]
	Moreover, we have
	\[
		m_f\bar{h}(S^{-a}T^{-1}\bar{d})(S^{-a}T^{-1}p_d)=h(S^{-a}T^{-1}d)=0
	\]
	hence $\bar{h}(S^{-a}T^{-1}\bar{d})=0$, so there exists $h'\colon S^{-(a+1)}T^{-1}E'\rightarrow D'$ such that $S^{-1}\bar{h}=h'(S^{-(a+1)}T^{-1}q)$. We also have
	\[
		k\bar{d}p_d(S^{-a}T^{-1}m_g)(S^{-a}T^{-1}p_g)=d(S^{-a}T^{-1}g)=0
	\]
	Thus $p_d(S^{-a}T^{-1}m_g)=0$ so there exists $\gamma\colon I(h)\rightarrow I(d)$ such that $\gamma p_h=p_d$. Also, $(S^aTd)g=0$, so there is $\bar{g}\colon D\rightarrow S^aTK(d)$ such that $g=k\bar{g}$. Finally, we have
	\[
		k\bar{g}m_hp_h=gh=d=k\bar{d}p_d=k\bar{d}\gamma p_h
	\]
	So $\bar{g}m_h=\bar{d}\gamma$, which gives $g'\colon D'\rightarrow E'$ such that the following diagram commutes:
	\[
		\begin{tikzpicture}
			\node(A) at (0,0){$I(h)$};
			\node(B) at (3,0){$D$};
			\node(C) at (6,0){$D'$};
			\node(D) at (0,-2){$I(d)$};
			\node(E) at (3,-2){$S^aTK(d)$};
			\node(F) at (6,-2){$E'$};

			\node(G) at (5.5,-1.5){$\pushout$};

			\draw[\normalmono] (A)--(B) node[above,midway] {$m_h$};
			\draw[\normalepi] (B)--(C) node[above,midway] {$S^{-1}p_f$};
			\draw[\normalepi] (A)--(D) node[left,midway] {$\gamma$};
			\draw[\map] (B)--(E) node[left,midway] {$\bar{g}$};
			\draw[\map] (C)--(F) node[right,midway] {$g'$};
			\draw[\normalmono] (D)--(E) node[below,midway] {$\bar{d}$};
			\draw[\normalepi] (E)--(F) node[below,midway] {$q$};
		\end{tikzpicture}
	\]
	The right-hand square is a pushout by Lemma \ref{lemma pushout recognition}. Let us now show exactness. $f'=(S^{-1}p_f)m_f$, hence it is normal by Lemma \ref{lemma diexact}. $\ker(g)=\ker(k\bar{g})=\ker(\bar{g})$, so $I(\bar{g})=I(g)=K(S^aTh)$. Thus $\bar{g}$ is normal, so it can be written as $\bar{g}=\xi p_g$ with $\xi\colon I(g)\normono S^aTK(d)$. Since pushouts along normal epimorphisms preserve normal maps \cite[Proposition 5.3.3]{PVdL3}, we obtain a commutative diagram
	\[
		\begin{tikzpicture}
			\node(A) at (0,0){$D$};
			\node(B) at (3,0){$I(g)$};
			\node(C) at (6,0){$S^aTK(d)$};
			\node(D) at (0,-2){$D'$};
			\node(E) at (3,-2){$I(g')$};
			\node(F) at (6,-2){$E'$};

			\draw[\normalepi] (A)--(B) node[above,midway] {$p_g$};
			\draw[\normalmono] (B)--(C) node[above,midway] {$\xi$};
			\draw[\normalepi] (A)--(D) node[left,midway] {$S^{-1}p_f$};
			\draw[\normalepi] (B)--(E) node[left,midway] {$e$};
			\draw[\normalepi] (C)--(F) node[right,midway] {$q$};
			\draw[\normalepi] (D)--(E);
			\draw[\normalmono] (E)--(F);
			\draw[\map] (A)..controls(2,1) and (4,1)..(C) node[above,midway] {$\bar{g}$};
			\draw[\map] (D)..controls(2,-3) and (4,-3)..(F) node[below,midway] {$g'$};
		\end{tikzpicture}
	\]
	which proves that $g'$ is normal. $g'$ is a pushout of $\bar{g}$, so they have the same cokernel by Lemma \ref{lemma cokernel of pushout}. Thus the right-hand square is a pullback by Lemma \ref{lemma pullback recognition}, so $I(d)$, which is the kernel of $q$, is also the kernel of $e$. Since $\gamma\colon I(h)\rightarrow I(d)$ is an epimorphism, this means that
	\[
		I(g')=C\big(I(d)\rightarrow I(g)\big)=C\big(I(h)\rightarrow I(g)\big)=C(p_gm_h)
	\]
	so $e$ is a pushout of $S^{-1}p_f$ along $p_g$ by Lemma \ref{lemma kernel of composite}. The same Lemma \ref{lemma kernel of composite} then gives
	\[
		I(g')=C\big((S^{-1}p_f)m_f\big)=C(f')
	\]
	since $p_g$ is a cokernel of $m_f$. $\bar{h}$ is normal because it is a pullback of $h$, and $I(h')=I\big(h'(S^{-(a+1)}T^{-1}q)\big)=I(S^{-1}\bar{h}\big)$. Moreover, the diagram
	\[
		\begin{tikzpicture}
			\node(A) at (0,0){$I(\bar{h})$};
			\node(B) at (3,0){$SD'$};
			\node(C) at (6,0){$D'$};
			\node(D) at (0,-2){$I(h)$};
			\node(E) at (3,-2){$D$};
			\node(F) at (6,-2){$D'$};

			\node(G) at (0.5,-0.5){$\pullback$};

			\draw[\normalmono] (A)--(B);
			\draw[\map] (B)--(C) node[above,midway] {$f'$};
			\draw[\normalmono] (A)--(D);
			\draw[\normalmono] (B)--(E) node[left,midway] {$m_f$};
			\draw[double,double distance=1mm] (C)--(F);
			\draw[\normalmono] (D)--(E) node[below,midway] {$m_h$};
			\draw[\normalepi] (E)--(F) node[below,midway] {$S^{-1}p_f$};
		\end{tikzpicture}
	\]
	proves that $I(h')=S^{-1}I(\bar{h})=S^{-1}K(f')$ by Lemma \ref{lemma kernel of composite}. Finally, since $k\xi p_g=k\bar{g}=g=m_gp_g$, $\xi$ is the unique morphism such that $k\xi=m_g$. The following diagram
	\[
		\begin{tikzpicture}
			\node(A) at (0,0){$S^aTK(h)$};
			\node(B) at (3,0){$S^aTK(d)$};
			\node(C) at (6,0){$S^aTI(\bar{h})$};
			\node(D) at (0,-2){$S^aTK(h)$};
			\node(E) at (3,-2){$E$};
			\node(F) at (6,-2){$S^aTI(h)$};

			\node(G) at (3.5,-0.5){$\pullback$};

			\draw[\normalmono] (A)--(B) node[above,midway] {$\xi$};
			\draw[\normalepi] (B)--(C);
			\draw[double,double distance=1mm] (A)--(D);
			\draw[\normalmono] (B)--(E) node[left,midway] {$k$};
			\draw[\normalmono] (C)--(F);
			\draw[\normalmono] (D)--(E) node[below,midway] {$m_g$};
			\draw[\normalepi] (E)--(F) node[below,midway] {$S^aTp_h$};
		\end{tikzpicture}
	\]
	implies that $C(\xi)=S^aTI(\bar{h})$. Hence, we find that $C(g')=S^aTI(\bar{h})=S^{a+1}TI(h')$ with the following diagram in which the left-hand square is both a pullback and a pushout by Lemma \ref{lemma pb with normal epis}:
	\[
		\begin{tikzpicture}
			\node(A) at (0,0){$S^aTK(h)$};
			\node(B) at (3,0){$S^aTK(d)$};
			\node(C) at (6,0){$S^aTI(\bar{h})$};
			\node(D) at (0,-2){$I(g')$};
			\node(E) at (3,-2){$E'$};
			\node(F) at (6,-2){$S^aTI(\bar{h})$};

			\node(G) at (2.5,-1.5){$\pushout$};

			\draw[\normalmono] (A)--(B) node[above,midway] {$\xi$};
			\draw[\normalepi] (B)--(C);
			\draw[\normalepi] (A)--(D) node[left,midway] {$e$};
			\draw[\normalepi] (B)--(E) node[right,midway] {$q$};
			\draw[double,double distance=1mm] (C)--(F);
			\draw[\normalmono] (D)--(E);
			\draw[\normalepi] (E)--(F);
		\end{tikzpicture}
	\]
	We then have an exact sequence
	\[
		\cdots\rightarrow S^{-(a+1)}T^{-1}D'\xrightarrow{S^{-(a+1)}T^{-1}g'} S^{-(a+1)}T^{-1}E'\xrightarrow{h'}D'\xrightarrow{S^{-1}f'} S^{-1}D'\xrightarrow{S^{-1}g'}\cdots
	\]
	as required.
\end{proof}

Let $\E$ be an exact couple of degree $a$. The previous theorem allows us to construct a sequence of exact couples $(\E^r)_{r\geqslant a}$, with $\E^r=(D^r,E^r,f^r,g^r,h^r)$, by setting $\E^a=\E$ and $\E^{r+1}=(\E^r)'$, whenever each morphism $d^r=g^rh^r$ is normal. In this case, we call $\E$ a \Def{normal} exact couple. We obtain a spectral sequence $(E^r,d^r)_{r\geqslant a}$ whose differential is the given map $d^r$.

\begin{lemma}\label{lemma Zr}
	Let $\E=(D,E,f,g,h)$ be a normal exact couple of degree $a$, and $(E,d)$ the spectral sequence that arises from $\E$. Then we have the formula
	\[
		S^{-a}T^{-1}Z^r=h^{-1}I\Big(f(Sf)\cdots (S^{r-a-1}f)\Big)
	\]
\end{lemma}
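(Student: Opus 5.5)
Argue by induction on $r\geqslant a$, comparing the recursive definition of $Z^{r+1}$ in Proposition \ref{prop Br Zr} (a pullback of $S^rTK(d^r)\normono E^r$ along $Z^r\norepi E^r$) with the recursive structure of the derived couples. The empty composite is the identity. For $r=a$, $S^{-a}T^{-1}Z^a=S^{-a}T^{-1}E^a$ and $h^{-1}(D)$ is everything, so the base case holds.

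For the inductive step I would first prove a stronger auxiliary statement at level $r$, describing the quotient map $S^{-a}T^{-1}Z^r\norepi S^{-a}T^{-1}E^r$. Write $\phi_r=f(Sf)\cdots(S^{r-a-1}f)$, so that $I(\phi_r)\normono D$ corresponds to $D^r$ up to the shift $S^{-(r-a)}$; iterating $D'=S^{-1}I(f)$ and using that images of composites compute iterated images gives $D^r\cong S^{-(r-a)}I(\phi_r)$. The auxiliary claim is that $h$ restricted to $S^{-a}T^{-1}Z^r=h^{-1}I(\phi_r)$ factors through $I(\phi_r)$. Composing with the comparison $I(\phi_r)\to D^r$ (an isomorphism up to shift) should then give $h^r\circ(\text{quotient }Z^r\norepi E^r)$. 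This is what the proof of the derived-couple theorem does: $h'$ is defined by $S^{-1}\bar h=h'(S^{-(a+1)}T^{-1}q)$, and $\bar h$ is the restriction of $h$ to $K(d)$, landing in $SD'$.

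Granting the auxiliary claim at level $r$, I would then compute $Z^{r+1}$. The kernel $K(d^r)=K(g^rh^r)$ is the pullback of $\ker(g^r)=I(f^r)$ along $h^r$, by Lemma \ref{lemma kernel of composite}. Pulling this back further along $Z^r\norepi E^r$ and pasting pullbacks, $S^{-a}T^{-1}Z^{r+1}$ is the preimage under $h|_{Z^r}$, and hence under $h$, of the subobject of $I(\phi_r)$ that corresponds to $I(f^r)$. Since $f^r$ is induced by $f$ on the images, that subobject is $I(\phi_r\circ S^{r-a}f)=I(\phi_{r+1})$. This is the iterated version of the first diagram in the proof of the theorem, where $m_f\bar h$ appears. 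Taking preimages along $I(\phi_{r+1})\rightarrowtail I(\phi_r)\rightarrowtail D$ composes, which gives the formula. The auxiliary claim then propagates, because the new quotient map restricted appropriately is exactly the $\bar h$/$h'$ construction.

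The step I expect to be the main obstacle is the shift bookkeeping together with verifying $D^r\cong S^{-(r-a)}I(\phi_r)$ compatibly with the maps. The key point is that images of composites behave well in a semi-abelian category: $I(\phi\circ\psi)$ is the image of $I(\psi)\to D$ under $\phi$, and regular epi–mono factorisations are stable. I would first check this carefully at $r=a+1$, where it is literally the pullback square from the proof of the theorem, and then phrase the induction so that each step only reuses that square applied to $\E^r$.
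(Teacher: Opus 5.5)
Your plan is correct and is essentially the paper's proof. Your auxiliary claim is exactly the paper's inductive pullback square $Z^r\norepi E^r\xrightarrow{S^rTh^r}S^rTD^r\rightarrowtail S^aTD$, propagated by pasting the derived-couple square $K(d^r)=(h^r)^{-1}I(f^r)$ with the square defining $Z^{r+1}$, and your identification $D^r\cong S^{-(r-a)}I(\phi_r)$ is the paper's separate first induction $I\big(f(Sf)\cdots(S^{r-a}f)\big)=S^{r-a}I(f^r)$, proved there via $f=m_fp_f$ and $f'=(S^{-1}p_f)m_f$. One harmless slip: $I(\phi_r)\rightarrowtail D$ is in general only a monomorphism, not a normal monomorphism $I(\phi_r)\normono D$ as you wrote, since composites of normal monomorphisms need not be normal here; this does not matter because you only take preimages along it.
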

\begin{proof}
	First of all, we show by induction on $r\geqslant a$ that the formula
	\[
		I\Big(f(Sf)\cdots(S^{r-a}f)\Big)=S^{r-a}I(f^r)
	\]
	holds. Since this is obvious for $r=a$, assume it holds for $r\geqslant a$. By construction of $f'$ we have:
	\begin{align*}
		I\Big(f(Sf)\cdots(S^{r+1-a}f)\Big) & =I\Big(m_fp_f(Sm_f)(Sp_f)\cdots(S^{r+1-a}m_f)(S^{r+1-a}p_f)\Big) \\
		                                   & =I\Big(p_f(Sm_f)(Sp_f)\cdots(S^{r+1-a}m_f)\Big)                  \\
		                                   & =I\Big((Sf')\cdots(S^{r+1-a}f')\Big)                             \\
		                                   & =SI\Big(f'(Sf')\cdots(S^{r-a}f')\Big)                            \\
		                                   & =S^{r+1-a}I(f^{r+1})
	\end{align*}
	The second equality holds because post-composition with the monomorphism $m_f$ does not change the image, as well as pre-composition with the epimorphism $p_f$. The last equality holds by the induction hypothesis applied on $f'$, since $f^{r+1}=(f')^r$. Now, we prove by induction on $r\geqslant a$ that there is a pullback diagram
	\begin{equation}\label{diag lemma formula Zr induction hyp}
		\begin{tikzpicture}[baseline=(G)]
			\node(A) at (0,0){$Z^r$};
			\node(B) at (3,0){$E$};
			\node(C) at (0,-1.5){$E^r$};
			\node(D) at (0,-3){$S^rTD^r$};
			\node(E) at (3,-3){$S^aTD$};

			\node(F) at (0.5,-0.5){$\pullback$};

			\node(G) at (-1,-1.65){};

			\draw[\normalmono] (A)--(B);
			\draw[\mono] (D)--(E);
			\draw[\normalepi] (A)--(C);
			\draw[\map] (C)--(D) node[left,midway] {$S^rTh^r$};
			\draw[\map] (B)--(E) node[right,midway] {$S^aTh$};
		\end{tikzpicture}
	\end{equation}

	The horizontal maps are isomorphisms when $r=a$, so then it is pullback. Assume it is a pullback for some $r\geqslant a$. By construction, there is a pullback
	\[
		\begin{tikzpicture}
			\node(A) at (0,0){$K(d^r)$};
			\node(B) at (3,0){$S^{-r}T^{-1}E^r$};
			\node(C) at (0,-1.5){$S^{-r}T^{-1}E^{r+1}$};
			\node(D) at (0,-3){$SD^{r+1}$};
			\node(E) at (3,-3){$D^r$};

			\node(F) at (0.5,-0.5){$\pullback$};

			\draw[\normalmono] (A)--(B);
			\draw[\normalmono] (D)--(E);
			\draw[\normalepi] (A)--(C);
			\draw[\map] (C)--(D) node[left,midway] {$Sh^{r+1}$};
			\draw[\map] (B)--(E) node[right,midway] {$h^r$};
		\end{tikzpicture}
	\]
	Applying $S^rT$ to this diagram, and pasting it to Diagram \ref{diag lemma formula Zr induction hyp} and the right-hand square of Diagram \ref{diag Br Zr} that defines $Z^{r+1}$, we obtain the result for $r+1$. However, we have for $r\geqslant a$ that
	\[
		S^rTD^r=S^{r-1}TI(f^{r-1})=S^aTI\Big(f(Sf)\cdots (S^{r-a-1}f)\Big)
	\]
	whence the result follows.
\end{proof}

\begin{lemma}\label{lemma Br}
	Let $\E=(D,E,f,g,h)$ be a normal exact couple of degree $a$, and $(E,d)$ the spectral sequence that arises from $\E$. Then we have the formula
	\[
		B^r=gK\Big((S^{a-r}f)\cdots(S^{-2}f)(S^{-1}f)\Big)
	\]
\end{lemma}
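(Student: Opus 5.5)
We argue by induction on $r\geqslant a$, in a way dual to the proof of Lemma \ref{lemma Zr}: there the cycles $Z^r$ were described as an inverse image under $h$ of images of iterates of $f$, and here $B^r$ should be a direct image under $g$ of kernels of iterates of $f$. For $r=a$ the composite is empty, so its kernel is $0$ and $g(0)=0=B^a$. We first prove an auxiliary formula, dual to the one at the start of the proof of Lemma \ref{lemma Zr}, describing how kernels of iterates of $f$ descend along the normal epimorphisms $S^{-1}p_f\colon D\norepi D'$. Write $\pi^r\colon D\norepi D^r$ for the composite of these normal epimorphisms, which is again a normal epimorphism in a semi-abelian category. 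The formula we aim for is
\[
\pi^r\Big(K\big((S^{a-r-1}f)\cdots(S^{-1}f)\big)\Big)=K(S^{-1}f^r).
\]
For $r=a$ this is trivial. For the inductive step we use $f'=(S^{-1}p_f)m_f$ and $f=m_fp_f$. Since $m_f$ is a monomorphism, it does not change kernels. Lemma \ref{lemma kernel of composite} gives the relation between the kernel of a composite and the kernels of its factors, and the direct image of a kernel along a normal epimorphism is handled by the pullback/pushout lemmas (Lemma \ref{lemma pb with normal epis}).

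Next, we identify $B^{r+1}$ from diagram \eqref{diag Br Zr}. It is the inverse image in $Z^{r+1}$ of $I(d^r)$ along the normal epimorphism $Z^{r+1}\norepi S^rTK(d^r)$, whose kernel is $B^r$. So $B^{r+1}$ is the join of $B^r$ with any subobject of $Z^{r+1}$ that maps onto $I(d^r)$. Now $d^r=g^rh^r$, and the exact couple $\E^r$ gives $I(h^r)=K(S^{-1}f^r)$. Hence $I(d^r)=g^r\big(K(S^{-1}f^r)\big)$.

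By the auxiliary formula, $K(S^{-1}f^r)$ is the image under $\pi^r$ of $K\big((S^{a-r-1}f)\cdots(S^{-1}f)\big)$. It therefore remains to check that $g^r\pi^r$ agrees with $g$ followed by the quotient $Z^{r}\norepi E^r$, restricted to the appropriate subobjects. We prove this compatibility $g^r\pi^r=q^r g$ (suitably restricted) by induction, using the defining square $g'(S^{-1}p_f)=q\bar g$ from the construction of the derived couple, with $g=k\bar g$.

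Given these two facts, the subobject $g\big(K((S^{a-r-1}f)\cdots(S^{-1}f))\big)$ of $E$ lies in $Z^{r+1}$ and maps onto $I(d^r)$. It also contains $B^r=g\big(K((S^{a-r}f)\cdots(S^{-1}f))\big)$, because the kernel of a shorter composite is contained in that of a longer one. Its join with $B^r$ is therefore itself, and it equals $B^{r+1}$.

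The main obstacle is the auxiliary step about direct images of kernels along $\pi^r$. In a semi-abelian category, images of normal subobjects along normal epimorphisms behave well, but equalities of the form ``$p(K(uv))=K(u')$'' require exactness, namely that $\ker(S^{-1}p_f)=I(h)$ sits inside the relevant kernel. We would use exactness of $\E$ to get $I(h)=K(S^{-1}f)\leqslant K$ of every longer composite, and then Lemma \ref{lemma kernel of composite} to transport the kernel across the quotient.
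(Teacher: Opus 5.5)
Your proposal is correct, but its last step differs from the paper's. The first two ingredients match the paper. Your auxiliary formula is the direct-image form of the paper's pullback square, which exhibits $P_r=K\big((S^{a-r-1}f)\cdots(S^{-1}f)\big)$ as the inverse image of $I(h^r)$ along $\pi^r\colon D\norepi D^r$; both are proved by regrouping $f=m_fp_f$ into copies of $f'$. Your compatibility $q^rg=g^r\pi^r$ is the commutativity part of the paper's composable pushout squares $D\norepi I(g)\normono Z^r$ over $D^r\norepi I(g^r)\normono E^r$.

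The paper finishes with a cube argument. The front face is a pullback of normal epimorphisms, hence a pushout. Pasting it with the pushout $D\norepi I(g)$ over $D^r\norepi I(g^r)$ and cancelling the epimorphism $I(h^r)\norepi I(d^r)$ (Lemma \ref{lemma pushout with epi}) shows that $gP_r\normono I(g)$ over $I(d^r)\to I(g^r)$ is a pushout, hence a pullback. Pasting with the pullback $I(g)\normono Z^r$ over $I(g^r)\normono E^r$ then identifies $gP_r$ with the inverse image of $I(d^r)$ in $Z^r$, which is $B^{r+1}$. This computes $B^{r+1}$ directly, without the inductive formula for $B^r$.

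You instead use the inductive hypothesis $B^r=gP_{r-1}\leqslant gP_r$. You combine it with the fact that the inverse image of $I(d^r)$ under a normal epimorphism with kernel $B^r$ is the join of $B^r$ with any subobject mapping onto $I(d^r)$. Your route needs only commutativity and direct images, not the pushout/pullback properties of the squares, so it is lighter. The paper's route yields the stronger structural squares.

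A few points need tightening:
\begin{itemize}
\item The join formula is a protomodularity fact and is not among the paper's lemmas, so you should justify it. The short five lemma does this: $gP_r\to I(d^r)$ and $B^{r+1}\to I(d^r)$ are normal epimorphisms with the same kernel $B^r$, so the inclusion $gP_r\leqslant B^{r+1}$ is an isomorphism.
\item The exactness worry in your last paragraph is unnecessary. For any normal epimorphism $p$ one has $p\big(K(up)\big)=p\big(p^{-1}K(u)\big)=K(u)$. Exactness is only needed for the paper's stronger pullback version.
\item The inclusion $I(g)\leqslant Z^r$, hidden in ``suitably restricted'', must be carried explicitly through the compatibility induction. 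It follows from $q^r(I(g))=I(g^r)\leqslant S^rTK(d^r)$, that is, from $(S^rTh^r)g^r=0$.
\end{itemize}
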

\begin{proof}
	We often use here that $SI(h)=K(f)$. We prove first by induction on $r\geqslant a$ that we have a pullback diagram
	\[
		\begin{tikzpicture}
			\node(A) at (0,0){$P^r_f$};
			\node(B) at (3,0){$D$};
			\node(C) at (0,-2){$I(h^r)$};
			\node(D) at (3,-2){$D^r$};

			\node(E) at (0.5,-0.5){$\pullback$};

			\draw[\normalmono] (A)--(B);
			\draw[\normalmono] (C)--(D);
			\draw[\normalepi] (A)--(C);
			\draw[\normalepi] (B)--(D);
		\end{tikzpicture}
	\]
	where the right-hand side morphism is the composite $D=D^a\norepi D^{a+1}\norepi\cdots\norepi D^r$ and $P^r_f\coloneq K\big((S^{a-r-1}f)\cdots(S^{-2}f)(S^{-1}f)\big)$. This holds for $r=a$, so suppose that it holds for $r\geqslant a$.
	We have:
	\begin{align*}
		P^{r+1}_f & =K\Big((S^{a-(r+1)-1}f)\cdots(S^{-2}f)(S^{-1}f)\Big)                                    \\
		          & =K\Big((S^{a-(r+1)-1}m_f)(S^{a-(r+1)-1}p_f)\cdots(S^{-2}p_f)(S^{-1}m_f)(S^{-1}p_f)\Big) \\
		          & =(S^{-1}p_f)^{-1}K\Big((S^{a-(r+1)-1}p_f)\cdots(S^{-2}p_f)(S^{-1}m_f)\Big)              \\
		          & =(S^{-1}p_f)^{-1}K\Big((S^{a-r-1}f')\cdots(S^{-1}f')\Big)                               \\
		          & =(S^{-1}p_f)^{-1}P^r_{f'}
	\end{align*}
	Hence, using the hypothesis on $f'$, we have a composite of pullback squares
	\[
		\begin{tikzpicture}
			\node(A) at (0,0){$P^{r+1}_f$};
			\node(B) at (3,0){$D$};
			\node(C) at (0,-2){$P^r_{f'}$};
			\node(D) at (3,-2){$D'$};
			\node(E) at (0,-4){$I(h^{r+1})$};
			\node(F) at (3,-4){$D^{r+1}$};

			\node(G) at (0.5,-0.5){$\pullback$};
			\node(H) at (0.5,-2.5){$\pullback$};

			\draw[\normalmono] (A)--(B);
			\draw[\normalmono] (C)--(D);
			\draw[\normalmono] (E)--(F);
			\draw[\normalepi] (A)--(C);
			\draw[\normalepi] (B)--(D) node[right, midway] {$S^{-1}p_f$};
			\draw[\normalepi] (C)--(E);
			\draw[\normalepi] (D)--(F);
		\end{tikzpicture}
	\]
	which concludes the induction. Now let us show by induction on $r\geqslant a$ that there are two composable pushout squares
	\begin{equation}\label{diag lemma formula Br induction hyp}
		\begin{tikzpicture}[baseline=(I)]
			\node(A) at (0,0){$D$};
			\node(B) at (3,0){$I(g)$};
			\node(C) at (6,0){$Z^r$};
			\node(D) at (0,-2){$D^r$};
			\node(E) at (3,-2){$I(g^r)$};
			\node(F) at (6,-2){$E^r$};

			\node(G) at (2.5,-1.5){$\pushout$};
			\node(H) at (5.5,-1.5){$\pushout$};

			\node(I) at (-1,-1.2){};

			\draw[\normalepi] (A)--(B);
			\draw[\normalmono] (B)--(C);
			\draw[\normalepi] (A)--(D);
			\draw[\normalepi] (B)--(E);
			\draw[\normalepi] (C)--(F);
			\draw[\normalepi] (D)--(E);
			\draw[\normalmono] (E)--(F);
		\end{tikzpicture}
	\end{equation}
	For $r=a$, this is just the normal factorization of $g$ on both rows. Assume this holds for some $r\geqslant a$. Using the constructions of $Z^{r+1}$ and $g^{r+1}$, we end up with a diagram
	\[
		\begin{tikzpicture}
			\node(A) at (0,0){$D$};
			\node(B) at (3,0){$I(g)$};
			\node(C) at (6,0){$Z^{r+1}$};
			\node(D) at (9,0){$Z^r$};
			\node(E) at (0,-2){$D^r$};
			\node(F) at (3,-2){$I(g^r)$};
			\node(G) at (6,-2){$S^rTK(d^r)$};
			\node(H) at (9,-2){$E^r$};
			\node(I) at (0,-4){$D^{r+1}$};
			\node(J) at (3,-4){$I(g^{r+1})$};
			\node(K) at (6,-4){$E^{r+1}$};

			\node(L) at (1.5,-1){$(1)$};
			\node(M) at (4.5,-1){$(2)$};
			\node(N) at (7.5,-1){$(3)$};
			\node(O) at (1.5,-3){$(4)$};
			\node(P) at (4.5,-3){$(5)$};

			\draw[\normalepi] (A)--(B);
			\draw[\normalmono] (B)--(C);
			\draw[\normalmono] (C)--(D);
			\draw[\normalepi] (A)--(E);
			\draw[\normalepi] (B)--(F);
			\draw[\normalepi] (C)--(G);
			\draw[\normalepi] (D)--(H);
			\draw[\normalepi] (E)--(F);
			\draw[\normalmono] (F)--(G);
			\draw[\normalmono] (G)--(H);
			\draw[\normalepi] (E)--(I);
			\draw[\normalepi] (F)--(J);
			\draw[\normalepi] (G)--(K);
			\draw[\normalepi] (I)--(J);
			\draw[\normalmono] (J)--(K);
		\end{tikzpicture}
	\]
	in which:
	\begin{itemize}
		\item $(1)$, $(2)+(3)$ and $(1)+(2)+(3)$ are pushout rectangles by the induction hypothesis;
		\item $(4)$, $(5)$ and $(4)+(5)$ are pushout rectangles by the construction of $g^{r+1}$;
		\item $(2)+(3)$ is a pullback rectangle because it is the right-hand square of Diagram \ref{diag lemma formula Br induction hyp};
		\item $(3)$ is a pullback square by the definition of $Z^{r+1}$;
	\end{itemize}
	Therefore, $(2)$ is a pullback square, and thus a pushout square by Lemma \ref{lemma pb with normal epis}, so $(1)+(4)$ and $(2)+(5)$ are pushout rectangles as wanted. Now, consider the commutative cube
	\[
		\begin{tikzpicture}
			\node(A) at (-1.5,-1.5){$P^r_f$};
			\node(B) at (1.5,-1.5){$D$};
			\node(C) at (-1.5,-4){$I(h^r)$};
			\node(D) at (1.5,-4){$D^r$};
			\node(E) at (0,0){$gP^r_f$};
			\node(F) at (3,0){$I(g)$};
			\node(G) at (0,-2.5){$I(d^r)$};
			\node(H) at (3,-2.5){$I(g^r)$};

			\node(I) at (0,-1.5){};
			\node(J) at (1.5,-2.5){};

			\draw[\normalmono] (A)--(B);
			\draw[\normalmono] (C)--(D);
			\draw[\normalepi] (A)--(C);
			\draw[\normalepi] (B)--(D);
			\draw[\normalepi] (A)--(E);
			\draw[\normalepi] (B)--(F);
			\draw[\normalepi] (C)--(G);
			\draw[\normalepi] (D)--(H);
			\draw[\normalmono] (E)--(F);
			\draw[\normaldemimono] (G)--(J);
			\draw[\map] (J)--(H);
			\draw[-] (E)--(I);
			\draw[\normalepi] (I)--(G);
			\draw[\normalepi] (F)--(H);
		\end{tikzpicture}
	\]
	in which the front face is both a pullback and a pushout square by Lemma \ref{lemma pb with normal epis}, and the right-hand side face is a pushout square. Hence the composite of the front and right-hand side faces, which is also the composite of the left-hand side and the back faces, is a pushout square. This implies together with $I(h^r)\norepi I(d^r)$ being an epimorphism that the back face of the cube is a pushout square by Lemma \ref{lemma pushout with epi}, so it is also a pullback square by Lemma \ref{lemma pb with normal epis}. By composition with the right-hand square of Diagram \ref{diag lemma formula Br induction hyp}, this means that $gP^r_f\cong B^{r+1}$.
\end{proof}

\section{Simplicial Objects}

\begin{definition}\label{def simplicial object}
	A \Def{simplicial object} $G$ in $\C$ is a functor from $\deltacat^\op$ to $\C$, where $\deltacat$ is the skeletal category of finite totally ordered sets with non-decreasing maps. This means a graded set of objects $G_q$ of $\C$ for $q\geqslant0$, together with maps $d_i\colon G_q\rightarrow G_{q-1}$ and $s_i\colon G_q\rightarrow G_{q+1}$ for $0\leqslant i\leqslant q$, called respectively \Def{faces} and \Def{degeneracies}, that satisfy the following identities:
	\begin{align*}
		d_id_j & =d_{j-1}d_i & \text{ if } & i<j                 \\
		s_is_j & =s_{j+1}s_i & \text{ if } & i\geqslant j        \\
		d_is_j & =1          & \text{ if } & i=j\text{ or }i=j+1 \\
		d_is_j & =s_{j-1}d_i & \text{ if } & i<j                 \\
		d_is_j & =s_jd_{i-1} & \text{ if } & i>j+1
	\end{align*}
\end{definition}

\begin{definition}\label{def Moore complex}
	Let $G$ be a simplicial object of $\C$. We define a chain complex $(NG,\partial)$ by setting $(NG)_0=G_0$,
	\[
		(NG)_q\coloneq \bigcap_{i=0}^{q-1}K(d_i\colon G_q\rightarrow G_{q-1})\leqslant G_q
	\]
	and $\partial_q\colon (NG)_q\rightarrow (NG)_{q-1}$ induced by the face $d_q$, for $q\geqslant1$, and $(NG)_q=0$ for $q<0$. The identity $d_id_q=d_{q-1}d_i$ ensures that it is well defined, and that $\partial^2=0$. $(NG,\partial)$ is called the \Def{Moore complex} of $G$. It is a normal chain complex \cite[Theorem 3.6]{EveraertVdLinden}, meaning that $\partial_q$ is a normal map for each $q$. The assignment $G\mapsto NG$ defines a functor $\simp(\C)\rightarrow\ch_{\geqslant0}(\C)$ which is exact \cite[Proposition 5.6]{EveraertVdLinden}. For each $q\geqslant0$, we write $H_qG$ or $H_q(G)$ for the \Def{$q$-th homology object} of $G$, which is by definition $C\big(I(\partial_{q+1})\rightarrow K(\partial_q)\big)$, the $q$-th homology object of $NG$.
\end{definition}

Let $G$ be a simplicial object in $\C$. We define another simplicial object $EG$ by setting $(EG)_q\coloneq K(d_0^{q+1}\colon G_{q+1}\rightarrow G_0)$; its faces and degeneracies $d_i\colon (EG)_q\rightarrow (EG)_{q-1}$ and $s_i\colon (EG)_q\rightarrow (EG)_{q+1}$ are induced respectively by $d_i\colon G_{q+1}\rightarrow G_q$ and $s_i\colon G_{q+1}\rightarrow G_{q+2}$ for each $0\leqslant i\leqslant q$.

\begin{proposition}\label{prop homology EG}
	For each $q\geqslant0$, $H_q(EG)=0$.
\end{proposition}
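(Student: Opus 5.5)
The plan is to exhibit a contracting degeneracy for $EG$ and use it to show directly that $K(\partial_q)\leqslant I(\partial_{q+1})$ in the Moore complex $N(EG)$ for every $q\geqslant0$. Then $I(\partial_{q+1})\rightarrow K(\partial_q)$ is an isomorphism and $H_q(EG)=C\big(I(\partial_{q+1})\rightarrow K(\partial_q)\big)=0$. No homology machinery is needed, only the simplicial identities of Definition \ref{def simplicial object} and the universal properties of kernels and intersections.

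\textbf{Key observation.} $(EG)_q$ sits inside $G_{q+1}$, and only the faces $d_0,\dots,d_q$ of $G_{q+1}$ are used as faces of $EG$. The top face $d_{q+1}$ is forgotten, so the unused degeneracy $s_{q+1}\colon G_{q+1}\rightarrow G_{q+2}$ should serve as a contraction. (If the convention for $d_0^{q+1}$ means that the forgotten face is $d_0$ instead, the same argument works with $s_0$ and the indices reflected. Fixing this convention is the first thing to check.)

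\textbf{Step 1: define the candidate preimage.} Fix $q\geqslant0$ and let $\iota\colon K(\partial_q)\normono (NEG)_q\leqslant G_{q+1}$ be the inclusion; for $q=0$ take $K(\partial_0)=(EG)_0$. Consider the composite
\[
  \sigma\coloneq s_{q+1}\iota\colon K(\partial_q)\rightarrow G_{q+2}.
\]

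\textbf{Step 2: check that $\sigma$ lands in $(NEG)_{q+1}$.} We need three facts.
\begin{itemize}
  \item For $i\leqslant q$, the identity $d_is_{q+1}=s_qd_i$ gives $d_i\sigma=s_qd_i\iota$. This is $0$ for $i<q$ because $\iota$ lands in the Moore complex, and $0$ for $i=q$ because $\iota$ lands in $K(\partial_q)$.
  \item Hence $\sigma$ factors through $\bigcap_{i\leqslant q}K(d_i)$.
  \item Composing further down to $G_0$ along the face maps defining $(EG)_{q+1}$ gives $0$. This holds because these composites pass through some $d_i\sigma=0$, or through $d_{q+1}s_{q+1}=1$ followed by the map $G_{q+1}\rightarrow G_0$, which kills $\iota$ since $\iota$ lands in $(EG)_q$.
\end{itemize}
By the universal property of kernels and intersections, $\sigma$ factors as $\bar\sigma\colon K(\partial_q)\rightarrow (NEG)_{q+1}$.

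\textbf{Step 3: conclude.} The differential $\partial_{q+1}$ is induced by $d_{q+1}$, and $d_{q+1}s_{q+1}=1$, so $\partial_{q+1}\bar\sigma=\iota$. Thus $\iota$ factors through $I(\partial_{q+1})\rightarrowtail (NEG)_q$, giving $K(\partial_q)\leqslant I(\partial_{q+1})$. The reverse inequality always holds since $\partial^2=0$, so the two subobjects coincide and the cokernel defining $H_q(EG)$ vanishes. For $q=0$ this reads: every element of $(EG)_0$ is $d_1s_1$ of something in $(EG)_1$, so $\partial_1$ is a (normal) epimorphism onto $(EG)_0$ and $H_0(EG)=C(\partial_1)=0$.

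\textbf{Main obstacle.} The only delicate point is the bookkeeping in Step 2: reconciling which face of $G_{q+1}$ is discarded in $EG$ with the Moore complex convention, where the kernels are taken for $d_0,\dots,d_{q-1}$ and the differential is the last face. The degeneracy must be chosen so that it commutes past exactly the kernel faces and is split by the differential face. If the indexing goes the other way, I would first try replacing $N$ by the equivalent normalization using the last $q$ faces, or reindex via the opposite simplicial object, and then rerun the same contraction argument with $s_0$.
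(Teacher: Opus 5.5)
Your proposal is correct and is essentially the paper's proof. The paper also uses the extra degeneracy $s_{q+1}$ restricted to $K(\partial_q)=\bigcap_{i=0}^{q}K(d_i)$, shows via $d_is_{q+1}=s_qd_i$ ($i\leqslant q$) that it lands in $(NEG)_{q+1}$, and uses $d_{q+1}s_{q+1}=1$ to get a section of the map $(NEG)_{q+1}\rightarrow K(\partial_q)$ induced by $\partial_{q+1}$. Your hedging about conventions is unnecessary, since with the paper's definition of $EG$ the forgotten face is indeed $d_{q+1}$. Likewise, your third bullet reduces to $d_0^{q+2}\sigma=d_0^{q+1}(d_0\sigma)=0$, which the paper absorbs by noting $K(d_0)\leqslant K(d_0^{q+2})$.
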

\begin{proof}
	The complex $NEG$ is $(NEG)_0=(EG)_0=K(d_0\colon G_1\rightarrow G_0)$, and for each $q\geqslant1$:
	\begin{align*}
		(NEG)_q & =\bigcap_{i=0}^{q-1}K\big(d_i\colon(EG)_q\rightarrow(EG)_{q-1}\big)                                                    \\
		        & =\bigcap_{i=0}^{q-1}K\big(d_i\colon K(d_0^{q+1})\rightarrow K(d_0^q)\big)                                              \\
		        & =K(d_0^{q+1})\cap\bigcap_{i=0}^{q-1}K(d_i\colon G_{q+1}\rightarrow G_q)                                                \\
		        & =\bigcap_{i=0}^{q-1}K(d_i\colon G_{q+1}\rightarrow G_q)                   & \text{ since }K(d_0)\leqslant K(d_0^{q+1})
	\end{align*}
	and the differential $\partial_q\colon(NEG)_q\rightarrow(NEG)_{q-1}$ is induced by $d_q$. Let $q\geqslant0$. The morphism $s_{q+1}$ restricts to a map $(NEG)_q\rightarrow G_{q+2}$, which itself induces $\sigma_{q+1}\colon K(\partial_q)\rightarrow(NEG)_{q+1}$ because the identity $d_is_{q+1}=s_qd_i$ holds for every $0\leqslant i\leqslant q$ and
	\[
		K(\partial_q)=(NEG)_q\cap K(d_q)=\bigcap_{i=0}^qK(d_i\colon G_{q+1}\rightarrow G_q)
	\]
	The lemma follows, since $\sigma_{q+1}$ is a splitting of the map $(NEG)_{q+1}\rightarrow K(\partial_q)$ induced by $\partial_{q+1}$, thus proving that $NEG$ is exact.
\end{proof}

For $i\leqslant q+1$, we have $d_id_{q+2}=d_{q+1}d_i$, and for $i\leqslant q$, we have $s_id_{q+1}=d_{q+2}s_i$. Therefore there is a morphism $\theta\colon EG\rightarrow G$ whose $q$-th component $\theta_q$ is the restriction of $d_{q+1}\colon G_{q+1}\norepi G_q$ to $(EG)_q$. The morphism $\theta$ is normal by Lemma~\ref{lemma diexact}.

\begin{lemma}
	Let $CH_0G$ be the constant simplicial object with value $H_0G$, let $\pi\colon G_0\norepi H_0G$ be the canonical projection, and for each $q\geqslant0$, define $\rho_q\colon G_q\rightarrow H_0G$ as $\rho_q=\pi d_0^q$. Then $\rho\colon G\rightarrow CH_0G$ is a morphism of simplicial objects, and is a cokernel of $\theta$.
\end{lemma}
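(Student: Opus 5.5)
First I check that $\rho$ is simplicial. Here $d_0^q\colon G_q\rightarrow G_0$ is the iterated face $d_0\cdots d_0$, which equals the composite of any $q$ faces. For a face $d_i\colon G_q\rightarrow G_{q-1}$ we have $d_0^{q-1}d_i=d_0^q$, since both sides are composites of $q$ face maps $G_q\rightarrow G_0$, and all such composites agree by the simplicial identities. For a degeneracy $s_i$ we have $d_0^{q+1}s_i=d_0^q$, by moving $s_i$ to the left with $d_js_i=s_{i-1}d_j$ or $d_js_i=1$ until it cancels. Hence $\rho_{q-1}d_i=\rho_q$ and $\rho_{q+1}s_i=\rho_q$. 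Since all faces and degeneracies of $CH_0G$ are identities, this is exactly the statement that $\rho$ is a morphism of simplicial objects.

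Next I show $\rho\theta=0$. Degreewise, $\rho_q\theta_q=\pi d_0^qd_{q+1}$ restricted to $(EG)_q=K(d_0^{q+1})$. By the remark above, $d_0^qd_{q+1}=d_0^{q+1}$, which vanishes on $K(d_0^{q+1})$, so $\rho_q\theta_q=0$.

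For universality, I use that cokernels in $\simp(\C)$ are computed degreewise. It therefore suffices to show that $\rho_q$ is a cokernel of $\theta_q$ for each $q$. Since $\theta$ is normal, its image $I(\theta_q)$ is a normal subobject of $G_q$ and $C(\theta_q)=C(I(\theta_q)\rightarrow G_q)$. The goal is to identify $I(\theta_q)$ with $K(\rho_q)=(d_0^q)^{-1}I(\partial_1)$, where $\pi$ is the cokernel of $I(\partial_1)=I(d_1|_{K(d_0)})\rightarrow G_0$. Then $\rho_q$, which is a normal epimorphism as a composite of the split epimorphism $d_0^q$ and $\pi$, is the cokernel of its kernel, and that kernel is $I(\theta_q)$.

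Computing the image is the main obstacle. The inclusion $I(\theta_q)\leqslant K(\rho_q)$ follows from $\rho\theta=0$. For the converse, I would use protomodularity through the split epimorphism $d_0^q\colon G_q\rightarrow G_0$, which is split by the iterated degeneracy $s_0^q$. It is enough to show two things: that $I(\theta_q)$ contains $K(d_0^q)$, and that $d_0^q$ maps $I(\theta_q)$ onto $I(\partial_1)$. Indeed, a subobject of $(d_0^q)^{-1}(I(\partial_1))$ containing the kernel and mapping onto $I(\partial_1)$ is the whole preimage; one can check this with the protomodularity of $\C$ or with Lemma~\ref{lemma kernel of composite}.

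The first claim, $K(d_0^q)\leqslant I(\theta_q)$, I would prove with the degeneracy $s_q\colon G_q\rightarrow G_{q+1}$. We have $d_{q+1}s_q=1$, and $d_0^{q+1}s_q=d_0^q$. So $s_q$ maps $K(d_0^q)$ into $K(d_0^{q+1})=(EG)_q$, and $\theta_q s_q$ is the identity on $K(d_0^q)$. For $q=0$ this step is vacuous.

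The second claim is that $d_0^q\theta_q=d_0^{q+1}$ on $K(d_0^{q+1})$, which does not hold as stated. I would correct it by using the face $d_1^q\colon G_{q+1}\rightarrow G_1$ obtained by iterating $d_1$; this sends $(EG)_q$ into $K(d_0)$. Compatibility of the face maps gives $d_0^q\theta_q=\partial_1\circ(\text{the induced map }(EG)_q\rightarrow K(d_0))$. The induced map is a split epimorphism onto $K(d_0)$, with section given by iterated degeneracies $s_1\cdots$. Hence the image of $d_0^q\theta_q$ is $I(\partial_1)$. Combining the two claims gives $I(\theta_q)=K(\rho_q)$ and concludes the proof.
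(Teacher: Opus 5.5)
\textbf{Verdict: there is a genuine gap.} Your checks that $\rho$ is simplicial and that $\rho\theta=0$ rest on a false identity. It is not true that all composites of $q$ face maps $G_q\to G_0$ agree. They correspond to the $q+1$ maps $[0]\to[q]$ in $\deltacat$, that is, to the vertex maps, and already $d_0\neq d_1\colon G_1\to G_0$. Precisely, $d_0^{q-1}d_i=d_0^q$ holds only for $i<q$, whereas $d_0^{q-1}d_q=d_1d_0^{q-1}$. Similarly $d_0^qd_{q+1}=d_1d_0^q$ (with $d_0^q\colon G_{q+1}\to G_1$), not $d_0^{q+1}$; for $q=0$ your identity would read $d_1=d_0$. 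Your degeneracy identity $d_0^{q+1}s_i=d_0^q$ is correct. However, neither $\rho_{q-1}d_q=\rho_q$ nor $\rho_q\theta_q=0$ is actually proved. Indeed, you later observe yourself that $d_0^q\theta_q=d_0^{q+1}$ fails, and that is exactly what your argument for $\rho\theta=0$ used.

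The missing ingredient is that $\pi d_0=\pi d_1$, i.e.\ $\pi$ coequalizes $d_0$ and $d_1$. This is where protomodularity enters. Both $\pi d_0$ and $\pi d_1$ vanish on $\ker(d_0)$, since $\pi$ kills $\partial_1=d_1\ker(d_0)$. Both equal $\pi$ after composing with $s_0$. Because $s_0$ splits $d_0$, the pair $\ker(d_0)$, $s_0$ is jointly epic, so $\pi d_0=\pi d_1$. With this, $\rho_{q-1}d_q=\pi d_1d_0^{q-1}=\pi d_0^q=\rho_q$ and $\rho_q\theta_q=\pi d_1d_0^qk=\pi d_0^{q+1}k=0$, as in the paper.

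The same slip recurs in your corrected second claim. Iterating $d_1$ gives $d_1d_1^q=d_1^{q+1}$, which is not $d_0^qd_{q+1}$: for $q=1$, $d_1d_1=d_1d_2$ while $d_0d_2=d_1d_0$. The right auxiliary map is $d_0^q\colon G_{q+1}\to G_1$.
\begin{itemize}
\item Since $d_0d_0^q=d_0^{q+1}$, it sends $(EG)_q$ into $K(d_0)$.
\item Since $d_0^qd_{q+1}=d_1d_0^q$, it gives $d_0^q\theta_q=\partial_1\circ d_0^q|_{(EG)_q}$.
\item Since $d_0^qs_0^q=1$ and $d_0^{q+1}s_0^q=d_0$, the map $s_0^q$ restricts to a section $K(d_0)\to(EG)_q$.
\end{itemize}

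After these repairs your universality argument goes through, and it is a genuinely different route from the paper's. You identify $K(\rho_q)=(d_0^q)^{-1}I(\partial_1)$ with $I(\theta_q)$ via the short five lemma along the split epimorphism $d_0^q$. This additionally uses the normality of $\partial_1$ to get $K(\pi)=I(\partial_1)$. The paper instead checks the universal property directly. It writes $fd_{q+1}=gd_0^{q+1}$ using that $d_0^{q+1}$ is the cokernel of $k$, shows that $g=fs_0^q$ coequalizes $d_0,d_1$, and factors $g$ through $\pi$. That route needs neither the image computation nor the normality of $\partial_1$.
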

\begin{proof}
	$H_0G$ is the cokernel of the differential $\partial_1\colon K(d_0)\rightarrow G_0$ that comes from the Moore complex $NG$ and that is induced by $d_1\colon G_1\rightarrow G_0$. Since $d_1$ and $d_0$ are both split by $s_0$, the morphism $\pi$ is a coequalizer of $d_0$ and $d_1$. Fix $q\geqslant 0$. For $1\leqslant i\leqslant q$, the identities $d_0d_i=d_{i-1}d_0$, $d_0s_i=s_{i-1}d_0$ and $d_0s_0=1$ hold. This implies that $d_0^{q-1}d_q=d_1d_0^{q-1}$ and $d_0^qs_q=s_0d_0^q$, and for each $0\leqslant i\leqslant q-1$, $d_0^{q-1}d_i=d_0^q$ and $d_0^qs_i=d_0^{q-1}$. Therefore, since $\pi d_0=\pi d_1$ and $d_1s_1=1$, we have for each $0\leqslant i\leqslant q$
	\begin{align*}
		\rho_{q-1}d_i & =\pi d_0^{q-1}d_i=\pi d_0^q=\rho_q     \\
		\rho_qs_i     & =\pi d_0^qs_i=\pi d_0^{q-1}=\rho_{q-1}
	\end{align*}
	Thus $\rho$ is a morphism. Let $k\colon (EG)_q\normono G_{q+1}$ be the kernel of $d_0^{q+1}$, so that we have $\theta_q=d_{q+1}k$.
	Then
	\[
		\rho_q\theta_q=\pi d_0^qd_{q+1}k=\pi d_1d_0^qk=\pi d_0^{q+1}k=0
	\]
	Let $f\colon G_q\rightarrow X$ in $\C$ be such that $f\theta_q=fd_{q+1}k=0$. As $d_0^{q+1}$ is a cokernel of $k$, there exists a unique $g\colon G_0\rightarrow X$ such that $fd_{q+1}=gd_0^{q+1}$.
	Thus
	\[
		g=gd_0^{q+1}s_0^{q+1}=fd_{q+1}s_0^{q+1}=fs_0^q
	\]
	and
	\[
		gd_1=fs_0^qd_1=fd_{q+1}s_0^q=gd_0^{q+1}s_0^q=gd_0
	\]
	So $g$ coequalizes $d_0$ and $d_1$. Hence there exists $h\colon H_0G\rightarrow X$ such that $g=h\pi$, and thus
	\[
		f=fd_{q+1}s_q=gd_0^{q+1}s_q=gd_0^q=h\pi d_0^q=h\rho_q
	\]
	The uniqueness of $h$ follows from $\rho_q$ being an epimorphism.
\end{proof}

We denote $\kappa\colon \Omega G\normono G$ the kernel of $\theta$, and we have in $\simp(\C)$ an exact sequence
\begin{equation}\label{exact sequence simp objects}
	\begin{tikzpicture}[baseline=(G)]
		\node(A) at (0,0){$0$};
		\node(B) at (2,0){$\Omega G$};
		\node(C) at (4,0){$EG$};
		\node(D) at (6,0){$G$};
		\node(E) at (8,0){$CH_0G$};
		\node(F) at (10,0){$0$};

		\node(G) at (-1,-0.1){};

		\draw[\map] (A)--(B);
		\draw[\map] (B)--(C) node[above, midway] {$\kappa$};
		\draw[\map] (C)--(D) node[above, midway] {$\theta$};
		\draw[\map] (D)--(E) node[above, midway] {$\rho$};
		\draw[\map] (E)--(F);
	\end{tikzpicture}
\end{equation}
Writing $IG$ for the image of $\theta$, we obtain two short exact sequences
\[
	\begin{tikzpicture}
		\node(A) at (0,0){$\Omega G$};
		\node(B) at (2,0){$EG$};
		\node(C) at (4,0){$IG$};

		\draw[\normalmono] (A)--(B) node[above, midway] {$\kappa$};
		\draw[\normalepi] (B)--(C);

		\node(D) at (0.05,-1){$IG$};
		\node(E) at (2,-1){$G$};
		\node(F) at (4.3,-1){$CH_0G$};

		\draw[\normalmono] (D)--(E);
		\draw[\normalepi] (E)--(F) node[above, midway] {$\rho$};
	\end{tikzpicture}
\]
which give two long exact sequences \cite[Proposition 2.4]{EveraertVdLinden}
\[
	\begin{tikzpicture}
		\node(A) at (0,0){$\cdots$};
		\node(B) at (2.3,0){$H_p(EG)$};
		\node(C) at (4.6,0){$H_p(IG)$};
		\node(D) at (6.9,0){$H_{p-1}(\Omega G)$};
		\node(E) at (9.2,0){$H_{p-1}(EG)$};
		\node(F) at (11.5,0){$\cdots$};

		\draw[\map] (A)--(B);
		\draw[\map] (B)--(C);
		\draw[\map] (C)--(D);
		\draw[\map] (D)--(E);
		\draw[\map] (E)--(F);

		\node(A') at (0,-1){$\cdots$};
		\node(B') at (2.3,-1){$H_{p+1}(CH_0G)$};
		\node(C') at (4.6,-1){$H_p(IG)$};
		\node(D') at (6.9,-1){$H_p(G)$};
		\node(E') at (9.2,-1){$H_p(CH_0G)$};
		\node(F') at (11.5,-1){$\cdots$};

		\draw[\map] (A')--(B');
		\draw[\map] (B')--(C');
		\draw[\map] (C')--(D');
		\draw[\map] (D')--(E');
		\draw[\map] (E')--(F');
	\end{tikzpicture}
\]
For each $p\geqslant 1$, $H_p(CH_0G)=0$, and so we established with Proposition \ref{prop homology EG} the isomorphism:
\begin{equation}\label{iso omega}
	H_{p-1}(\Omega G)\cong H_p(G)
\end{equation}

\begin{definition}
	A \Def{double simplicial object} $G$ in $\C$ is a functor from $(\deltacat\times\deltacat)^\op$ to $\C$. This is an indexed set of objects $G_{p,q}$ in $\C$ with $p,q\geqslant0$, together with vertical and horizontal face maps and degeneracy maps as follows:
	\begin{align*}
		d_i^v & \colon G_{p,q}\rightarrow G_{p,q-1} \\
		s_i^v & \colon G_{p,q}\rightarrow G_{p,q+1} \\
		d_i^h & \colon G_{p,q}\rightarrow G_{p-1,q} \\
		s_i^h & \colon G_{p,q}\rightarrow G_{p+1,q}
	\end{align*}
	The $d_i^v$ and $s_i^v$ commute with the $d_i^h$ and the $s_i^h$, and we find the same identities as in Definition \ref{def simplicial object} between vertical maps and between horizontal maps.
\end{definition}

We write $\dsimp(\C)$ for the category of double simplicial object in $\C$.
We can view a double simplicial object $G$ as a simplicial object in $\simp(\C)$. Indeed, for each $p\geqslant0$, $G^v_p\coloneq(G_{p,\ast},d_i^v,s_i^v)$ is a simplicial object in $\C$, \ie an object of $\simp(\C)$, and the families of horizontal face maps $(d_i^h\colon G_{p,q}\rightarrow G_{p-1,q})_q$ and degeneracy maps $(s_i^h\colon G_{p,q}\rightarrow G_{p+1,q})_q$ are morphisms in $\simp(\C)$ and make $G^v$ a simplicial object.

\begin{definition}
	The \Def{$q$-th vertical homology} of a double simplicial object $G$ is the simplicial object $H_q^vG$ defined by $(H_q^vG)_p\coloneq H_q(G_{p,\ast})$, and whose face maps and degeneracy maps are induced in homology respectively by $d_i^h\colon G_{p,q}\rightarrow G_{p-1,q}$ and $s_i^h\colon G_{p,q}\rightarrow G_{p+1,q}$. We write $H_p^hH_q^vG$ for the $p$-th homology object of $H_q^vG$.
\end{definition}

Let $G$ be a double simplicial object. Let us apply \ref{exact sequence simp objects} to $G$ seen as the simplicial object $G^v$:
\[
	\begin{tikzpicture}
		\node(A) at (0,0){$0$};
		\node(B) at (2,0){$\Omega_v G$};
		\node(C) at (4,0){$E_vG$};
		\node(D) at (6,0){$G$};
		\node(E) at (8,0){$C_vH_0^vG$};
		\node(F) at (10,0){$0$};

		\draw[\map] (A)--(B);
		\draw[\map] (B)--(C) node[above, midway] {$\kappa_v$};
		\draw[\map] (C)--(D) node[above, midway] {$\theta_v$};
		\draw[\map] (D)--(E) node[above, midway] {$\rho_v$};
		\draw[\map] (E)--(F);
	\end{tikzpicture}
\]

\begin{definition}
	Given a double simplicial object $G$, we define its \Def{diagonal simplicial object} $\Delta G$ by setting
	\begin{align*}
		(\Delta G)_n=G_{n,n} &  & d_i=d_i^vd_i^h &  & s_i=s_i^vs_i^h
	\end{align*}
	The assignment $G\mapsto\Delta G$ is an exact functor $\Delta\colon\dsimp(\C)\rightarrow\simp(\C)$.
\end{definition}

If $I_vG$ denotes the image of $\theta_v$, then by exactness of $\Delta$ we obtain the long exact sequences
\[
	\begin{tikzpicture}
		\node(A) at (0,0){$\cdots$};
		\node(B) at (1.9,0){$H_p(\Delta E_vG)$};
		\node(C) at (4.3,0){$H_p(\Delta I_vG)$};
		\node(D) at (7,0){$H_{p-1}(\Delta \Omega_v G)$};
		\node(E) at (9.9,0){$H_{p-1}(\Delta E_vG)$};
		\node(F) at (11.9,0){$\cdots$};

		\draw[\map] (A)--(B);
		\draw[\map] (B)--(C);
		\draw[\map] (C)--(D);
		\draw[\map] (D)--(E);
		\draw[\map] (E)--(F);

		\node(A') at (0,-1){$\cdots$};
		\node(B') at (1.9,-1){$H_p(\Delta I_vG)$};
		\node(C') at (4.3,-1){$H_p(\Delta G)$};
		\node(D') at (6.9,-1){$H_p(\Delta C_vH_0^vG)$};
		\node(E') at (9.9,-1){$H_{p-1}(\Delta I_vG)$};
		\node(F') at (11.9,-1){$\cdots$};

		\draw[\map] (A')--(B');
		\draw[\map] (B')--(C');
		\draw[\map] (C')--(D');
		\draw[\map] (D')--(E');
		\draw[\map] (E')--(F');
	\end{tikzpicture}
\]

\begin{proposition}
	For each $p\geqslant0$, $H_p(\Delta E_vG)=0$.
\end{proposition}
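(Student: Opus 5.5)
The plan is to imitate the proof of Proposition~\ref{prop homology EG}: compute the Moore complex $N\Delta E_vG$ explicitly inside the objects $G_{n,n+1}$, then produce, for each $n\geqslant0$, a morphism $\sigma_{n+1}\colon K(\partial_n)\rightarrow(N\Delta E_vG)_{n+1}$ splitting the map $(N\Delta E_vG)_{n+1}\rightarrow K(\partial_n)$ induced by $\partial_{n+1}$. As there, such a splitting forces $I(\partial_{n+1})=K(\partial_n)$, so every homology object vanishes. I have not verified that the candidate splitting below works, and the plan includes a fallback.

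\textbf{Step 1: the Moore complex.} By definition $(\Delta E_vG)_n=(E_vG)_{n,n}=K\big((d^v_0)^{n+1}\colon G_{n,n+1}\rightarrow G_{n,0}\big)$. Its faces are $d_i=d^v_id^h_i$, restricted from $G_{n,n+1}$, for $0\leqslant i\leqslant n$. The same computation as in Proposition~\ref{prop homology EG}, using $K(d^v_0)\leqslant K\big((d^v_0)^{n+1}\big)$, gives
\[
	(N\Delta E_vG)_n=\bigcap_{i=0}^{n-1}K\big(d^v_id^h_i\big)\cap K\big((d^v_0)^{n+1}\big)\leqslant G_{n,n+1},
\]
and $K(\partial_n)$ is obtained by adding $K(d^v_nd^h_n)$ to this intersection.

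\textbf{Step 2: the candidate splitting.} Row by row, $E_vG$ carries the extra top vertical degeneracy $s^v_{q+1}$ used in Proposition~\ref{prop homology EG}. It satisfies $d^v_{q+1}s^v_{q+1}=1$ and $d^v_is^v_{q+1}=s^v_qd^v_i$ for $i\leqslant q$, and it commutes with all horizontal maps. To land in diagonal degree $n+1$, I combine it with the top horizontal degeneracy and take $\sigma=s^h_ns^v_{n+1}$, going from $G_{n,n+1}$ to $G_{n+1,n+2}$. The checks are then as follows.
\begin{itemize}
	\item For $i\leqslant n-1$, the identities give $d^v_id^h_i\sigma=s^h_{n-1}s^v_nd^v_id^h_i$, which vanishes on $K(\partial_n)$.
	\item For the top face, $d^v_{n+1}d^h_{n+1}\sigma=1$.
	\item $\sigma$ preserves $K\big((d^v_0)^{\bullet}\big)$, because the vertical extra degeneracy does.
\end{itemize}

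\textbf{Main obstacle.} The remaining condition is the face $i=n$. Here $d^v_nd^h_n\sigma=s^v_nd^v_n$, and this is not obviously zero on $K(\partial_n)$: we only know $d^v_nd^h_nx=0$, not $d^v_nx=0$. In a non-additive setting I cannot correct $\sigma$ by subtracting a term. My first attempt would be to replace $\sigma$ by a different combination of one horizontal and one vertical degeneracy (for instance $s^h_{n}s^v_{n}$ followed by the top extra degeneracy), choosing it so that the problematic face becomes the top face $d_{n+1}$, where the splitting identity holds.

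\textbf{Fallback.} If no such choice works, I would argue via homotopy instead. The maps $h_j=s^h_js^v_{j+1}$, built from the vertical extra degeneracy, define a simplicial homotopy on $\Delta E_vG$ from the identity to the zero map. I would then show that simplicial homotopies induce equal maps on $H_\ast$ computed via the Moore complex. This holds in the semi-abelian context, because the Moore functor is exact and a homotopy can be encoded as a map out of $\Delta E_vG\otimes\Delta[1]$ whose two ends induce the same map on homology. The conclusion would be that the identity of $H_p(\Delta E_vG)$ is zero, so $H_p(\Delta E_vG)=0$ for all $p\geqslant0$.
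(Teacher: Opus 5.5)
Your proposal has a genuine gap. Neither route reaches the conclusion, and the missing step is the whole content of the statement.

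\textbf{The splitting route.} In Step 2 the face $i=n$ is not merely ``not obviously'' zero: the candidate really fails. Take the bisimplicial abelian group $G=A\otimes B$ with $A=\Z[\Delta[1]]$, $B=\Z[\Delta[2]]$, and $n=1$. The element $x=(\iota_1-s_0d_1\iota_1)\otimes(\iota_2-s_0d_0\iota_2)\in G_{1,2}$ has $d^v_0x=0$, hence $(d^v_0)^2x=0$ and $d^h_0d^v_0x=0$, and it has $d^h_1d^v_1x=0$. So $x\in K(\partial_1)$. Yet $d_1\sigma x=s^v_1d^v_1x=s^v_1\big((\iota_1-s_0d_1\iota_1)\otimes(d_1\iota_2-d_0\iota_2)\big)\neq0$.

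Your suggested repair cannot work either. For $\sigma=s^h_js^v_k$, the requirement $d_{n+1}\sigma=1$ forces $j=n$ and $k\in\{n,n+1\}$. But $s^h_ns^v_n$ is the diagonal degeneracy $s_n$, and $d_ns_n=1$.

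The paper's own proof is exactly your Step 2 with the two directions exchanged. It shifts horizontally, $(E_vG)_{p,q}\leqslant G_{p+1,q}$, and uses $\sigma_{p+1}$ induced by $s^h_{p+1}s^v_{p+1}$, claiming it splits ``for the same reasons'' as in Proposition~\ref{prop homology EG}. As printed, $s^v_{p+1}$ is not defined on $G_{p+1,p}$. The only index giving $d_{p+1}\sigma_{p+1}=1$ is $s^v_p$, and then $d^h_pd^v_p\sigma_{p+1}=s^h_pd^h_p$, which is your obstacle again. So following the paper does not close the gap. There is also no reason to expect any splitting morphism $K(\partial_n)\to(N\Delta E_vG)_{n+1}$ in the non-additive setting. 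The classical remedy, in groups the alternating product $h_n(x)h_{n-1}(x)^{-1}h_{n-2}(x)\cdots$ of homotopy operators, is not even a homomorphism.

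\textbf{The fallback.} It points in the right direction but is wrong as written, on two counts.

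First, the operators $h_j=s^h_js^v_{j+1}$ are not a simplicial homotopy. Indeed $d_{j+1}h_{j+1}=s^v_{j+1}d^v_{j+1}$ while $d_{j+1}h_j=1$. Also $d_0h_0=s^v_0d^v_0$, which does not vanish on $(\Delta E_vG)_n$. The contraction of a column $E(G_{n,\ast})$ coming from the extra degeneracy is not built from degeneracies alone. In degree $n$ it is $h^v_j=\alpha_j^\ast$, where $\alpha_j\colon[n+2]\to[n+1]$ is the identity on $\{0,\dots,j\}$ and sends everything above $j$ to the hidden vertex $n+1$; so it involves the faces $d^v_{j+1},\dots,d^v_n$. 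The diagonal contraction is then $h_j=s^h_jh^v_j$. This does satisfy the homotopy identities, with $d_0h_0=0$ on $\Delta E_vG$ and $d_{n+1}h_n=1$.

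Second, and decisively, you only assert that simplicially homotopic maps induce the same morphism on Moore homology in a semi-abelian category. This does not follow from exactness of $N$. Saying that the two ends of $\Delta E_vG\otimes\Delta[1]$ induce the same map on homology restates the claim; it does not prove it. You need an argument that $(N\Delta E_vG)_{n+1}\to K(\partial_n)$ is a normal epimorphism whenever the identity is homotopic to $0$. One option is to use the internal Kan property of simplicial objects in a regular Mal'tsev category and reason up to regular epimorphisms; the other is a precise reference for homotopy invariance of $H_\ast$. Until that is supplied, $H_p(\Delta E_vG)=0$ is not established.
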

\begin{proof}
	We adapt the proof of \ref{prop homology EG}. The complex $(E_vG)$ is defined by $(E_vG)_{p,q}=K\big((d_0^h)^{p+1}\colon G_{p+1,q}\rightarrow G_{0,q}\big)$, and the face maps $d_i^v$, $d_i^h$ and degeneracy maps $s_i^v$, $s_i^h$ are induced respectively by the face maps  $d_i^v$, $d_i^h$ and degeneracy maps $s_i^v$, $s_i^h$ of the original complex $G$. Therefore the complex $(N\Delta E_vG)$ is defined by $(N\Delta E_vG)_0=(E_vG)_{0,0}=K(d_0^h\colon G_{1,0}\rightarrow G_{0,0})$, and for each $p\geqslant1$:
	\begin{align*}
		(N\Delta E_vG)_p & =\bigcap_{i=0}^{p-1}K\big(d_i^hd_i^v\colon(E_vG)_{p,p}\rightarrow(E_vG)_{p-1,p-1}\big)                                                        \\
		                 & =\bigcap_{i=0}^{p-1}K\big(d_i^hd_i^v\colon K((d_0^h)^{p+1})\rightarrow K((d_0^h)^p)\big)                                                      \\
		                 & =K\big((d_0^h)^{p+1}\colon G_{p+1,p}\rightarrow G_{0,p}\big)\cap\bigcap_{i=0}^{p-1}K\big(d_i^hd_i^v\colon G_{p+1,p}\rightarrow G_{p,p-1}\big) \\
	\end{align*}
	and the differential $\partial_p\colon(N\Delta E_vG)_p\rightarrow(N\Delta E_vG)_{p-1}$ is induced by $d_p^hd_p^v$. Let $p\geqslant0$. $s_{p+1}^hs_{p+1}^v$ restricts to a map $(N\Delta E_vG)_p\rightarrow G_{p+2,p+1}$, which itself induces $\sigma_{p+1}\colon K(\partial_p)\rightarrow(N\Delta E_vG)_{p+1}$ for the same reasons as in \ref{prop homology EG}, plus the fact that $(d_0^h)^{p+2}s_{p+1}^h=(d_0^h)^{p+1}$. The lemma follows, since $\sigma_{p+1}$ is a splitting of the map $(NEG)_{q+1}\rightarrow K(\partial_q)$ induced by $\partial_{q+1}$.
\end{proof}

Hence $H_p(\Delta I_vG)$ is isomorphic to $H_{p-1}(\Delta \Omega_v G)$. Moreover, $H_p(\Delta C_vH_0^vG)=H_p^hH_0^vG$, so we obtain a long exact sequence

\begin{equation}\label{long exact sequence for q=1}
	\begin{tikzpicture}[baseline=(G)]
		\node(A) at (0,0){$\cdots$};
		\node(B) at (2,0){$H_{p-1}(\Delta\Omega_v G)$};
		\node(C) at (4.4,0){$H_p(\Delta G)$};
		\node(D) at (6.5,0){$H_p^hH_0^vG$};
		\node(E) at (8.9,0){$H_{p-2}(\Delta\Omega_v G)$};
		\node(F) at (10.8,0){$\cdots$};

		\node(G) at (0,-0.1){};

		\draw[\map] (A)--(B);
		\draw[\map] (B)--(C);
		\draw[\map] (C)--(D);
		\draw[\map] (D)--(E);
		\draw[\map] (E)--(F);
	\end{tikzpicture}
\end{equation}
We have $H_0^v(\Omega_v^qG)=H_q^v(G)$ by \ref{iso omega}. Applying \ref{long exact sequence for q=1} to $\Omega_v^qG$ instead of $G$, we obtain for every $q\geqslant0$ the long exact sequence
\[
	\begin{tikzpicture}
		\node(A) at (0,0){$\cdots$};
		\node(B) at (2.15,0){$H_{p-1}(\Delta\Omega_v^{q+1} G)$};
		\node(C) at (4.95,0){$H_p(\Delta\Omega_v^q G)$};
		\node(D) at (7.15,0){$H_p^hH_q^vG$};
		\node(E) at (9.75,0){$H_{p-2}(\Delta\Omega_v^{q+1} G)$};
		\node(F) at (11.8,0){$\cdots$};

		\draw[\map] (A)--(B);
		\draw[\map] (B)--(C);
		\draw[\map] (C)--(D);
		\draw[\map] (D)--(E);
		\draw[\map] (E)--(F);
	\end{tikzpicture}
\]
Let us define two bigraded objects $E$ and $D$ in $\C$ by setting:
\[
	E_{p,q}\coloneq
	\begin{cases}
		H_p^hH_q^vG & \text{if }p\geqslant0\text{ and }q\geqslant0 \\
		0           & \text{else}
	\end{cases}
\]
and
\[
	D_{p,q}\coloneq
	\begin{cases}
		H_p(\Delta\Omega_v^q G) & \text{if }p\geqslant0\text{ and }q\geqslant0 \\
		0                       & \text{if }p<0                                \\
		D_{p-1,q+1}             & \text{if }q<0
	\end{cases}
\]
We obtain an exact couple $\E$ of degree $2$:
\[
	\begin{tikzpicture}
		\node(A) at (0,0){$D$};
		\node(B) at (3,0){$D$};
		\node(C) at (1.5,-2){$E$};

		\draw[\map] (A)--(B) node[above,midway] {$f$};
		\draw[\map] (B)--(C) node[below right,midway] {$g$};
		\draw[\map] (C)--(A) node[below left,midway] {$h$};
	\end{tikzpicture}
\]

Here we set $f_{p,q}$ to be the identity map for $q<0$. To ensure that $\E$ is a normal exact couple, we need to recall some facts about abelian objects.

\begin{definition} \cite{Huq}\cite[Definition 1.5.3]{Borceux-Bourn}
	An object $X$ of $\C$ is \Def{abelian} if there exists a morphism $\mu\colon X\times X\rightarrow X$ such that the following commutes:
	\[
		\begin{tikzpicture}
			\node(A) at (0,0){$X$};
			\node(B) at (2.5,0){$X\times X$};
			\node(C) at (5,0){$X$};
			\node(D) at (2.5,-2){$X$};

			\draw[\map] (A)--(B) node[above,midway] {$(1,0)$};
			\draw[\map] (C)--(B) node[above,midway] {$(0,1)$};
			\draw[double,double distance=1mm] (A)--(D);
			\draw[\map] (B)--(D) node[right,midway] {$\mu$};
			\draw[double,double distance=1mm] (C)--(D);
		\end{tikzpicture}
	\]
\end{definition}

\begin{proposition}\label{prop sub quotient abelian}\cite[Theorem 4.2]{Gran:Central-Extensions}
	Any subobject of an abelian object is abelian. Any quotient object of an abelian object is abelian.\noproof
\end{proposition}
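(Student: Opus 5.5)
The plan is to prove both halves directly from the definition of an abelian object given just above, using the structure map $\mu\colon X\times X\rightarrow X$. Throughout, let $X$ be abelian with such a $\mu$, so that $\mu(1,0)=1_X=\mu(0,1)$.

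For a subobject $m\colon A\rightarrowtail X$, I will produce $\mu_A\colon A\times A\rightarrow A$ by showing that the composite $\mu(m\times m)\colon A\times A\rightarrow X$ factors through $m$. The factorization should come from the fact that $A\times A$ is generated by its two coproduct injections. In a protomodular (indeed unital) category, the pair $(1,0),(0,1)\colon A\rightarrow A\times A$ is jointly strongly epimorphic. The composites $\mu(m\times m)(1,0)=m$ and $\mu(m\times m)(0,1)=m$ both factor through $m$. Factor $\mu(m\times m)$ through its image $I\rightarrowtail X$. Joint strong epimorphicity then gives that $A\rightarrowtail I$ is invertible, so that $I\leqslant A$. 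The resulting map $\mu_A$ satisfies $m\mu_A(1,0)=m$, and since $m$ is a monomorphism this gives $\mu_A(1,0)=1_A$; the other unit law follows the same way.

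For a quotient, take a regular (normal) epimorphism $e\colon X\norepi Q$, with $e=\coker$ of its kernel $k\colon K\normono X$. I want $\mu_Q$ with $\mu_Q(e\times e)=e\mu$. The morphism $e\times e$ is a regular epimorphism, since products of regular epis are regular epis in a regular category. Its kernel is $k\times k\colon K\times K\rightarrow X\times X$, and because $e\times e$ is a normal epimorphism it is the cokernel of $k\times k$. It therefore suffices to check that $e\mu(k\times k)=0$.

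This reduces to the subobject case: by the first part, $K$ is abelian and $\mu(k\times k)=k\mu_K$, so $e\mu(k\times k)=ek\mu_K=0$. The unit laws for $\mu_Q$ follow because $(1,0)e=(e\times e)(1,0)$ and $e$ is epi.

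The main obstacle I expect is the joint strong epimorphicity of the injections into $A\times A$ in a semi-abelian category. This is the standard unitality property, from \cite[Borceux–Bourn]{Borceux-Bourn}, which I would simply cite. I would also need the fact that $\ker(e\times e)=k\times k$, which holds because limits commute with limits.
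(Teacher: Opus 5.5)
Your proof is correct. The paper gives no argument of its own for this statement. It imports it from Gran's work on Mal'tsev categories (Theorem 4.2 there), so the citation gives a result valid in a broader Mal'tsev-categorical context.

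Your route is instead a direct, self-contained verification in the pointed semi-abelian case. It uses only three ingredients:
\begin{itemize}
\item unitality, that is, joint strong epimorphicity of $(1,0),(0,1)\colon A\to A\times A$ (strictly speaking, it is \emph{pointed} protomodularity that gives this);
\item regularity, namely image factorizations and the fact that $e\times e$ is again a regular epimorphism;
\item the fact that regular epimorphisms are normal.
\end{itemize}

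Two small tightenings.

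In the subobject step, the passage through the image of $\mu(m\times m)$ is valid but compressed. It is cleaner to pull $m$ back along $\mu(m\times m)$. Both injections $A\to A\times A$ factor through the resulting monomorphism into $A\times A$, which is therefore an isomorphism. This is exactly the factorization $\mu(m\times m)=m\mu_A$.

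In the quotient step, the appeal to the first part is unnecessary. We have $e\mu(k\times k)(1,0)=e\mu(1,0)k=ek=0$, and likewise for $(0,1)$. Since the injections into $K\times K$ are jointly epimorphic, it follows directly that $e\mu(k\times k)=0$.
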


\begin{proposition}\label{prop sub abelian normal}\cite[Corollary 3.2.17]{Borceux-Bourn}
	Any subobject of an abelian object is normal.\noproof
\end{proposition}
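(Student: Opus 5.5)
We need to prove: any subobject $A\rightarrowtail X$ of an abelian object $X$ is a normal monomorphism, i.e. a kernel. The plan is to use the fact that in a semi-abelian category a monomorphism is normal exactly when it is \emph{normal to an equivalence relation}, and to construct that relation explicitly from the multiplication $\mu$ of $X$.

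First, equip $X$ with an internal difference. Since $X$ is abelian, $\mu$ is an internal abelian group structure: it is associative and commutative, and inverses exist by protomodularity. Concretely, the shear map $(\pi_1,\mu)\colon X\times X\rightarrow X\times X$ restricts to the identity on the kernel $X$ of the first projection, and is compatible with the first projection. By the split short five lemma, which holds since $\C$ is protomodular, it is an isomorphism, and from its inverse we obtain a morphism $\delta\colon X\times X\rightarrow X$, informally $(x,y)\mapsto y-x$, satisfying $\mu(x,\delta(x,y))=y$.

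Next, given a monomorphism $m\colon A\rightarrowtail X$, define $R\leqslant X\times X$ as the pullback of $m$ along $\delta$, so that informally $R=\{(x,y)\mid y-x\in A\}$. Check that $R$ is an internal equivalence relation. Reflexivity follows from $\delta(x,x)=0$. Symmetry uses that $A$ is closed under the inverse, which is obtained as $\delta(-,0)$ composed with $m$. Since $A$ is itself abelian by Proposition \ref{prop sub quotient abelian}, and the multiplication on it is the unique one, $m$ is a morphism of internal groups; this is where commutativity and the uniqueness of internal abelian structure in a pointed protomodular category enter. Transitivity uses $\delta(x,z)=\mu(\delta(x,y),\delta(y,z))$ and closure of $A$ under $\mu$. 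These identities are all equalities of morphisms between finite products, and can be verified by composing with projections and using the unit laws together with the fact that the pair $(1,0),(0,1)$ is jointly strongly epic in a protomodular category.

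Then the normalization of $R$, namely the pullback of $R\rightarrowtail X\times X$ along $(0,1)\colon X\rightarrow X\times X$, is exactly $\{y\mid \delta(0,y)\in A\}=A$, so $m$ is normal to $R$. By Barr-exactness, $R$ is effective, i.e. the kernel pair of its coequalizer $c\colon X\norepi X/R$. Its normalization $A$ is then $K(c)$, so $m=\ker(c)$ is a normal monomorphism.

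The main obstacle is justifying that $A$ is closed under $\mu$ and inverses, i.e. that the subobject inherits the structure compatibly. I would first try to deduce this from the fact that a subobject of an abelian object is abelian with the restricted multiplication: $\mu\circ(m\times m)$ restricted to $A\times 0$ and to $0\times A$ equals $m$, and by joint strong epicity of the coproduct injections into $A\times A$ (protomodularity) it factors through $m$. Alternatively, one can phrase the whole argument via Huq commutators: $A$ and $X$ commute, and in a semi-abelian category a subobject commuting with the whole object is central, hence normal.
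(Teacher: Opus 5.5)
Your proof is correct, but it takes a different route from the paper, which gives no argument of its own and simply cites Borceux--Bourn. There the natural setting is pointed protomodular categories, and ``normal'' means \emph{normal to an internal equivalence relation} in Bourn's sense. Your closing remark gives the conceptual route in that general theory: $m\colon A\rightarrowtail X$ cooperates with $1_X$ via $\mu(m\times 1_X)$, so it is central, hence normal.

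You instead build the relevant equivalence relation explicitly, as the coset relation $R=\delta^{-1}(A)$ obtained from the shear isomorphism. You then use Barr-exactness to make $R$ effective, so that $A$, being the normalization of $R$, is the kernel of $X\norepi X/R$. This gives exactly what the paper needs: an honest kernel, used to factor $d$ as a normal epimorphism followed by a normal monomorphism. The general statement needs no exactness but only yields Bourn-normality. That notion coincides with being a kernel precisely because equivalence relations are effective in a semi-abelian category, which is your last step.

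You can shorten your argument. A semi-abelian category is protomodular, hence Mal'cev, so every reflexive internal relation is already an equivalence relation. Since $\delta(1_X,1_X)=0$ factors through $m$, $R$ is reflexive, and symmetry and transitivity come for free. The step you single out as the main obstacle, closure of $A$ under $\mu$ and inverses, is therefore unnecessary. Your justification of it via the joint strong epicity of $(1,0),(0,1)\colon A\rightarrow A\times A$ is nonetheless sound.

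One small detail: when you apply the split short five lemma to the shear map, take $(\pi_1,\mu)\circ(1,0)=(1,1)$ as the section of the target projection. With that choice the morphism of split extensions also commutes with the sections.
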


\begin{theorem}\label{theorem abelian homology object}\cite[Theorem 5.5]{EveraertVdLinden}\cite[Proposition 3.1]{Bourn:Direct-Image}
	Let $G$ be a simplicial object in $\C$. For each $q\geqslant1$, the homology object $H_qG$ is an abelian object of $\C$.\noproof
\end{theorem}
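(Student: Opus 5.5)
The statement is cited from \cite[Theorem 5.5]{EveraertVdLinden} and \cite[Proposition 3.1]{Bourn:Direct-Image}. A self-contained argument would run as follows.

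The plan is to realise $H_qG$, for $q\geqslant1$, as a quotient of an abelian object, or more precisely as a normal subobject that commutes with itself in the Huq sense. By Proposition \ref{prop sub quotient abelian} it then suffices to show that the cycles $K(\partial_q)$, taken modulo the boundaries $I(\partial_{q+1})$, commute with themselves. Concretely, I would use the characterisation of abelianness of $X$ via a Huq commutator: $X$ is abelian iff $[X,X]=0$. In a semi-abelian category, the image of $[A,B]$ along a normal epimorphism is the commutator of the images. So if $Z=K(\partial_q)\leqslant (NG)_q$ and $B=I(\partial_{q+1})$, it is enough to prove $[Z,Z]\leqslant B$, since then $[H_qG,H_qG]$ is the image of $[Z,Z]$ in $C(B\to Z)$, which is zero.

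The key step is to construct, for the subobject $Z$, an explicit cooperator modulo boundaries using degeneracies. This is the categorical version of the classical group-theoretic fact that $[x,y]$ is a boundary for cycles $x,y$ in degree $q\geqslant1$. In groups the witness is the element $s_{q}(x)\,s_{q-1}(y)\ldots$, or more precisely a product of degenerate simplices whose last face is the commutator and whose other faces vanish.

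Categorically, I would consider the two morphisms $s_{q-1}\circ z$ and $s_q\circ z$ from $Z$ to $G_{q+1}$, where $z\colon Z\normono G_q$ is the inclusion. One computes their faces with the simplicial identities. For $i<q-1$, $d_i$ kills both because $z$ lands in $\bigcap K(d_i)$. For $i=q-1$, $d_{q-1}s_{q-1}=1$ while $d_{q-1}s_q=s_{q-1}d_{q-1}$ vanishes on $Z$; the face $d_q$ behaves symmetrically, and $d_{q+1}$ applied to either degeneracy gives $s_{q-1}d_q=0$ or $1$ respectively.

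Using the map $Z\times Z\to G_{q+1}$ induced by these two degeneracies, one then shows that the commutator $[Z,Z]\leqslant G_q$ is the image under $d_{q+1}$ of a subobject of $(NG)_{q+1}$. Building this map requires care, since $G_{q+1}$ is not abelian; I would instead use Bourn's approach of working in the fibration of points. The split epimorphisms $d_q\colon G_q\rightleftarrows G_{q-1}$ give connectors, and the Moore normalisation relates $K(d_q)\cap Z$-type subobjects whose commutator is controlled by a Mal'tsev-type argument.

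The main obstacle is precisely this construction: producing, in a general semi-abelian category without elements, a witness that $[Z,Z]$ lies in $I(\partial_{q+1})$. I would first try the shortcut through the Dold--Kan/Mal'tsev argument. Namely, $G$ is a simplicial object in a Mal'tsev category, so its truncation at level $q$ relative to boundaries yields an internal groupoid. The quotient $H_qG$ then appears as the kernel of an internal groupoid whose domain and codomain maps coincide (the $q$-th homotopy groupoid has discrete underlying structure in degree $q\geqslant1$). An object that is such a kernel with equal source and target is abelian by the standard fact that a groupoid with discrete objects in a Mal'tsev context has abelian vertex objects, via the Eckmann--Hilton argument applied to the two compatible multiplications coming from degeneracies in directions $q-1$ and $q$. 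Proving that these two multiplications agree and are unital is where the simplicial identities above enter.

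Note that this is only an outline: the key step, showing $[Z,Z]\leqslant I(\partial_{q+1})$ or equivalently building the two unital, interchanging multiplications on $H_qG$, has been identified but not proved here. It is exactly the content of the cited results.
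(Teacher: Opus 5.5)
Your proposal has a genuine gap, and it sits at the only nontrivial step. The paper gives no argument here; it imports the statement from Everaert--Van der Linden and Bourn, so your final deferral to those results matches the paper. As a self-contained argument, however, you never show $[Z,Z]\leqslant B$ for $Z=K(\partial_q)$ and $B=I(\partial_{q+1})$.

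Your reduction to that inclusion is sound: $B$ is normal in $Z$ because $NG$ is a normal complex, and Huq commutators are preserved by regular images. The opening idea of presenting $H_qG$ as a quotient of an abelian object via Proposition~\ref{prop sub quotient abelian} does not work, because $K(\partial_q)$ is in general not abelian. For example, in Milnor's simplicial group $F(S^1\vee S^1)$ the $1$-cycles form a free group of rank $2$.

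The mechanisms you sketch do not supply the missing step.
\begin{itemize}
\item A morphism $Z\times Z\to G_{q+1}$ ``induced by'' $s_{q-1}z$ and $s_qz$ does not exist, because these two maps need not cooperate. Only the copairing $(s_{q-1}z,s_qz)\colon Z+Z\to G_{q+1}$ exists.
\item Your face table is wrong at $i=q$: $d_qs_{q-1}=d_qs_q=1$, so $d_q$ does not behave ``symmetrically''. This matters. In groups the natural candidate $[s_{q-1}x,s_qy]$ has all faces trivial except $d_q$, where it gives $[x,y]$, so it does not lie in $(NG)_{q+1}$. Repairing it, e.g.\ as $[s_{q-1}x,s_qy]^{-1}[s_qx,s_qy]$, uses a division in $G_{q+1}$ with no direct element-free analogue.
\item The groupoid/Eckmann--Hilton route is only named. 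Constructing that groupoid, or two interchanging unital multiplications on $H_qG$, is the same missing content.
\end{itemize}

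Here is one way to close the gap with the ingredients you already have.

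First, restrict the copairing along the kernel $Z\diamond Z\normono Z+Z$ of the canonical map $Z+Z\to Z\times Z$. For $i<q-1$, $d_i$ kills both components. The faces $d_{q-1}(s_{q-1}z,s_qz)$ and $d_{q+1}(s_{q-1}z,s_qz)$ are the copairings $(z,0)$ and $(0,z)$, which vanish on $Z\diamond Z$. So the restriction factors through $M=\bigcap_{i\neq q}K(d_i\colon G_{q+1}\to G_q)$. Its $d_q$-face is $z$ composed with the commutator morphism $Z\diamond Z\to Z$, hence $[Z,Z]\leqslant d_q(M)$.

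It remains to see that $d_q(M)=d_{q+1}\big((NG)_{q+1}\big)=B$.
\begin{itemize}
\item On $L=\bigcap_{i<q}K(d_i)\leqslant G_{q+1}$, the faces $d_q$ and $d_{q+1}$ restrict to a reflexive graph $L\rightrightarrows(NG)_q$ with common section $s_q$. Its two kernels are $(NG)_{q+1}$ and $M$.
\item For any reflexive graph $(d,c,e)$ in $\C$, the subobjects $c(K(d))$ and $d(K(c))$ are normal, being regular images of normal subobjects.
\item They have the same cokernel, namely the coequalizer of $d$ and $c$. This follows from protomodularity, exactly as in the paper's identification of $\pi$ as a coequalizer of $d_0,d_1$. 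So the two subobjects coincide.
\end{itemize}
This yields $[Z,Z]\leqslant B$ without elements, and your reduction then finishes the proof.
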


Let us write $d\coloneq gh$. For each $(p,q)\neq(0,0)$, $E_{p,q}$ is an abelian object of $\C$ by Theorem \ref{theorem abelian homology object} and Proposition \ref{prop sub quotient abelian}, hence $d_{p,q}$ is normal. Moreover, $d_{0,0}$ has domain $E_{2,-1}=0$, and therefore is $0$.
Hence, $d$ is a normal map by \ref{prop sub abelian normal}, so we can derive $\E$ to get $\E'$. The same holds for  $\E'$ since $E'$ is a subquotient of $E$, and so on by induction, consequently $\E$ is a normal exact couple. Let $(E,d)$ be the spectral sequence that arises from $\E$, and $B^r$, $Z^r$ as in Proposition \ref{prop Br Zr}. Fix $p$, $q$ two non-negative integers, $n=p+q$, and write $H_n\coloneq D_{n,0}=H_{p+q}(\Delta G)$.	We have a filtration $(F_kH_n)_{0\leqslant k\leqslant n}$ of $H_n$ defined by
\begin{align*}
	F_kH_n & \coloneq I\Big(f_{n,0}f_{n-1,1}\cdots f_{k+1,n-k-1}\colon D_{k,n-k}\rightarrow H_n\Big) \\
	       & =I\Big((S^{k-n}f)\cdots(S^{-1}f)\Big)_{k,n-k}
\end{align*}
On one hand, for every $r>n-k+2$, the map $f_{k-2+r,n-k+2-r}=(S^{2-r}f)_{k,n-k}$ is the identity of $H_n$ because $n-k+2-r<0$. In particular, for every $r>q+2$, $(S^{2-r}f)_{p,q}$ is the identity of $H_n$, so Lemma \ref{lemma Br} implies that
\[
	B^\infty_{p,q}=B^{q+2}_{p,q}=gK(\varphi)
\]
where $\varphi=(S^{2-(q+2)}f)\cdots(S^{-2}f)(S^{-1}f)_{p,q}=f_{n,0}f_{n-1,1}\cdots f_{p+1,q-1}$. We have a commutative diagram
\begin{equation}\label{diagram with cokernel}
	\begin{tikzpicture}[baseline=(I)]
		\node(A) at (0,0){$K(\varphi f_{p,q})$};
		\node(B) at (3,0){$D_{p-1,q+1}$};
		\node(C) at (6,0){$F_{p-1}H_n$};
		\node(D) at (0,-2){$K(\varphi)$};
		\node(E) at (3,-2){$D_{p,q}$};
		\node(F) at (6,-2){$F_pH_n$};

		\node(G) at (0.5,-0.5){$\pullback$};

		\node(I) at (-1,-1.2){};

		\draw[\normalmono] (A)--(B);
		\draw[\normalepi] (B)--(C);
		\draw[\map] (A)--(D);
		\draw[\map] (B)--(E) node[left,midway] {$f_{p,q}$};
		\draw[\mono] (C)--(F);
		\draw[\normalmono] (D)--(E);
		\draw[\normalepi] (E)--(F);
	\end{tikzpicture}
\end{equation}

\begin{proposition}
	Taking the cokernels of the vertical maps in \eqref{diagram with cokernel} yields a short exact sequence
	\[
		\begin{tikzpicture}
			\node(A) at (0,0){$B^\infty_{p,q}$};
			\node(B) at (2.4,0){$I(g)_{p,q}$};
			\node(C) at (6,0){$C(F_{p-1}H_n\rightarrow F_pH_n)$};

			\draw[\normalmono] (A)--(B);
			\draw[\normalepi] (B)--(C);
		\end{tikzpicture}
	\]
\end{proposition}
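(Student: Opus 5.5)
Here is the plan. In diagram \eqref{diagram with cokernel} the left vertical map $K(\varphi f_{p,q})\to K(\varphi)$ is a pullback of $f_{p,q}$ along a normal monomorphism, and the right-hand rows are normal epimorphisms onto images. So the cokernels of the three vertical maps can be compared one at a time.

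The first step identifies the middle cokernel. By exactness of $\E$, $C(f_{p,q})=I(g)_{p,q}$, since $p_g$ is the cokernel of $m_f$ and $I(f)=K(g)$. This gives the middle term $I(g)_{p,q}$, and the map $D_{p,q}\norepi I(g)_{p,q}$ is $p_g$.

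The second step identifies the left cokernel. The left-hand square is a pullback of the normal monomorphism $K(\varphi)\normono D_{p,q}$ along $f_{p,q}$. By Lemma \ref{lemma kernel of composite}, or directly by pasting, the image of $K(\varphi)$ under $p_g$ is $C\big(K(\varphi f_{p,q})\to K(\varphi)\big)$: the kernel of $K(\varphi)\to D_{p,q}\norepi I(g)_{p,q}$ is $K(\varphi)\cap I(f_{p,q})$, which is the image of $K(\varphi f_{p,q})$ by the pullback property. Hence the left cokernel is $gK(\varphi)=B^\infty_{p,q}$, computed by Lemma \ref{lemma Br} as above, and it sits in $I(g)_{p,q}$ as a subobject via $m_g$-compatible maps. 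It is a normal subobject because $I(g)_{p,q}\leqslant E_{p,q}$ is abelian when $(p,q)\neq(0,0)$, by Propositions \ref{prop sub quotient abelian} and \ref{prop sub abelian normal}. The case $(0,0)$ is trivial since $\varphi=1$ there, so $K(\varphi)=0$. More generally, the direct image of a normal subobject along a normal epimorphism is normal in a semi-abelian category.

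The third step identifies the right cokernel and checks exactness. The right vertical map is $F_{p-1}H_n\rightarrowtail F_pH_n$. I would show that the right square is a pushout (equivalently, via Lemma \ref{lemma pb with normal epis}/\ref{lemma pushout recognition}, that it is a pullback or satisfies the kernel condition), so that the cokernels satisfy $C(f_{p,q})\to C(F_{p-1}\to F_p)$. The main step is to view the composite $D_{p,q}\norepi F_pH_n\norepi C(F_{p-1}H_n\to F_pH_n)$ as a single normal epimorphism with kernel $K(\varphi)\vee I(f_{p,q})$. Indeed, its kernel is the preimage of $F_{p-1}H_n$ under the image factorization of $\varphi$, namely $K(\varphi)\cdot I(f_{p,q})$, because $F_{p-1}H_n=\varphi(I(f_{p,q}))$. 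Factoring it through $p_g\colon D_{p,q}\norepi I(g)_{p,q}$, whose kernel $I(f_{p,q})$ is contained in that join, gives a normal epimorphism $I(g)_{p,q}\norepi C(F_{p-1}H_n\to F_pH_n)$. Its kernel is the image under $p_g$ of $K(\varphi)\vee I(f_{p,q})$, which is $p_g(K(\varphi))=B^\infty_{p,q}$, using Lemma \ref{lemma kernel of composite} for kernels of composites of normal epimorphisms. This yields the short exact sequence.

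The main obstacle is justifying the join and direct-image computations in a non-abelian setting, in particular that the kernel of $D_{p,q}\to C(F_{p-1}\to F_p)$ is exactly the join of $K(\varphi)$ and $I(f_{p,q})$, with $F_{p-1}H_n$ normal in $F_pH_n$. I would handle this with the $3\times3$-lemma and the fact that $\varphi$ restricted through its normal-epi factorization preserves joins of subobjects containing $K(\varphi)$.
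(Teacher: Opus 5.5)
Your proof is correct. Its first half is essentially the paper's. The paper applies Lemma~\ref{lemma diexact} to the pullback square of normal monomorphisms $I(\mu)=K(\varphi)\cap I(f_{p,q})$ to get $C(\mu)\normono I(g)_{p,q}$. You compute $C(\mu)$ directly as the image of $p_g\ker(\varphi)$, using Lemma~\ref{lemma kernel of composite} and pullback-stability of normal epimorphisms, and then use di-exactness for normality.

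You genuinely diverge at the exactness at $I(g)_{p,q}$. The paper never computes a kernel inside $D_{p,q}$. Lemma~\ref{lemma pushout recognition} shows that the square formed by $p_g$, $D_{p,q}\norepi F_pH_n$ and the two maps into $C(F_{p-1}H_n\to F_pH_n)$ is a pushout. By the dual half of Lemma~\ref{lemma kernel of composite}, the induced normal epimorphism $I(g)_{p,q}\norepi C(F_{p-1}H_n\to F_pH_n)$ is then the cokernel of $p_g\ker(\varphi)$, i.e.\ of $B^\infty_{p,q}\normono I(g)_{p,q}$. Normality of $B^\infty_{p,q}$ finishes the argument; the paper leaves this last step implicit.

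You instead compute the kernel of $D_{p,q}\to C(F_{p-1}H_n\to F_pH_n)$ as $K(\varphi)\vee I(f_{p,q})$ and push it forward along $p_g$. This works in a semi-abelian category, but it needs facts the paper's route avoids, and you should state them:
\begin{itemize}
\item $F_{p-1}H_n$ is normal in $F_pH_n$, as the direct image of the normal subobject $I(f_{p,q})$ along a normal epimorphism (Lemma~\ref{lemma diexact}).
\item $e^{-1}(e(S))=S\vee K(e)$ for a normal epimorphism $e$. This is the short five lemma applied to $K(e)\normono S\vee K(e)\norepi e(S)$ and $K(e)\normono e^{-1}(e(S))\norepi e(S)$, rather than the $3\times 3$-lemma.
\item Direct images along $p_g$ preserve joins and satisfy $p_g(p_g^{-1}N)=N$.
\end{itemize}
In exchange, your route makes the middle exactness fully explicit, which the paper does not.

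One sentence in your third step should be removed. As written, it claims the right-hand square of \eqref{diagram with cokernel} is a pushout. That is false in general: by Lemma~\ref{lemma cokernel of pushout} it would make $I(g)_{p,q}\to C(F_{p-1}H_n\to F_pH_n)$ an isomorphism, i.e.\ force $B^\infty_{p,q}=0$. The pushout actually available is the square described above. That square is generally not a pullback, so your ``equivalently'' fails as well. Your main computation does not depend on this sentence, so it is a misstatement rather than a gap.
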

\begin{proof}
	Write $\mu$ for the left-hand vertical map and $\nu$ for the right-hand vertical map in \eqref{diagram with cokernel}. Since $C(f_{p,q})=I(g)_{p,q}$, $C(\nu)$ is the pushout of $I(g)_{p,q}$ and $F_p{H_n}$ over $D_{p,q}$ by Lemma \ref{lemma pushout recognition}. Therefore we have a normal epimorphism $I(g)_{p,q}\norepi C(\nu)$ as a pushout of the normal epimorphism $D_{p,q}\norepi F_pH_n$. On the other side, $\mu$ being a pullback of the normal map $f_{p,q}$ gives a pullback square of normal monomorphisms
	\[
		\begin{tikzpicture}
			\node(A) at (0,0){$I(\mu)$};
			\node(B) at (3,0){$I(f_{p,q})$};
			\node(C) at (0,-2){$K(\varphi)$};
			\node(D) at (3,-2){$D_{p,q}$};

			\node(E) at (0.5,-0.5){$\pullback$};

			\draw[\normalmono] (A)--(B);
			\draw[\normalmono] (A)--(C);
			\draw[\normalmono] (B)--(D);
			\draw[\normalmono] (C)--(D);
		\end{tikzpicture}
	\]
	By Lemma \ref{lemma diexact}, taking the cokernels of the vertical maps gives a normal monomorphism $C(\mu)\normono I(g)_{p,q}$, which is the image of K($\varphi f_{p,q}$) in $I(g)_{p,q}$, hence $C(\mu)=gK(\varphi)=B^\infty_{p,q}$.
\end{proof}

On the other hand, the map $f_{p+1-r,q+r-2}=(S^{r-3}f)_{p-2,q+1}\colon D_{p-r,q+r-1}\rightarrow D_{p+1-r,q+r-2}$ is zero for every $r>p$, so we have
\begin{align*}
	S^2Th^{-1}I\Big(f(Sf)\cdots(S^{r-3}f)\Big)_{p,q} & =h^{-1}I\Big(f(Sf)\cdots(S^{r-3}f)\Big)_{p-2,q+1} \\
	                                                 & =h^{-1}(0)_{p-2,q+1}                              \\
	                                                 & =K(h)_{p-2,q+1}                                   \\
	                                                 & =S^2TK(h)_{p,q}                                   \\
	                                                 & =I(g)_{p,q}
\end{align*}
which by Lemma \ref{lemma Zr} gives
\[
	Z^\infty_{p,q}=Z^r_{p,q}=I(g)_{p,q}
\]
Thus we have $C(F_{p-1}H_n\rightarrow F_pH_n)\cong E^\infty_{p,q}$, and we proved:
\begin{theorem}\label{main theorem}
	If $G$ is a double simplicial object in a semi-abelian category, then there are spectral sequences
	\begin{align*}
		E^2_{p,q} & = H_p^h H_q^v G \Rightarrow H_{p+q}(\Delta G) \notag           \\
		E^2_{p,q} & = H_p^v H_q^h G \Rightarrow H_{p+q}(\Delta G) \tag*{$\square$}
	\end{align*}
\end{theorem}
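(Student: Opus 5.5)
The theorem is essentially the summary of the construction carried out just before its statement, so the proof assembles those pieces. For the first spectral sequence I take the exact couple $\E=(D,E,f,g,h)$ of degree $2$ built from the long exact sequences
\[
\cdots\rightarrow H_{p-1}(\Delta\Omega_v^{q+1}G)\rightarrow H_p(\Delta\Omega_v^qG)\rightarrow H_p^hH_q^vG\rightarrow H_{p-2}(\Delta\Omega_v^{q+1}G)\rightarrow\cdots
\]
These come from applying \eqref{long exact sequence for q=1} to $\Omega_v^qG$, together with $H_0^v(\Omega_v^qG)\cong H_q^vG$ obtained by iterating \eqref{iso omega}. I first check that the indexing matches Definition \ref{def exact couple} with $a=2$. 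That is, $h_{p,q}\colon E_{p+2,q-1}\rightarrow D_{p,q}$ is the connecting map, $f_{p+1,q-1}$ is the map $H_{p}(\Delta\Omega_v^{q}G)\rightarrow H_{p+1}(\Delta\Omega_v^{q-1}G)$, and for $q<0$ the convention $D_{p,q}=D_{p-1,q+1}$ with $f$ the identity keeps exactness at the boundary.

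Next I establish normality. By Theorem \ref{theorem abelian homology object}, every $E_{p,q}$ with $(p,q)\neq(0,0)$ is abelian: for $p\geqslant1$ it is a homology object of a simplicial object, and for $p=0$, $q\geqslant1$ it is a quotient of the abelian object $H_q^v G_{0,\ast}$, which is abelian by Proposition \ref{prop sub quotient abelian}. Hence each $d_{p,q}$ with $(p,q)\neq(0,0)$ lands in an abelian object. Its image is therefore a normal subobject by Proposition \ref{prop sub abelian normal}, and since $d=gh$ factors through normal epimorphisms, $d_{p,q}$ is normal. The component $d_{0,0}$ is zero. Every later page $E^r$ is a subquotient of $E$, so the same argument applies at each stage. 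Thus $\E$ is a normal exact couple, and the derivation theorem produces a spectral sequence with $E^2_{p,q}=H_p^hH_q^vG$. It is bounded because $E$ lives in the first quadrant.

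For convergence I use the filtration $F_kH_n=I(f_{n,0}\cdots f_{k+1,n-k-1})$ of $H_n=H_n(\Delta G)$. I compute $B^\infty_{p,q}=gK(\varphi)$ by Lemma \ref{lemma Br}, and $Z^\infty_{p,q}=I(g)_{p,q}$ by Lemma \ref{lemma Zr}, using that the relevant $f$'s vanish once $r>p$. Taking cokernels of the vertical maps in \eqref{diagram with cokernel} then gives the short exact sequence $B^\infty_{p,q}\normono I(g)_{p,q}\norepi C(F_{p-1}H_n\rightarrow F_pH_n)$. Since $E^\infty\cong C(B^\infty\rightarrow Z^\infty)$, this yields $E^\infty_{p,q}\cong C(F_{p-1}H_n\rightarrow F_pH_n)$, which is convergence. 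I also need to check that the filtration starts at $0$ and ends at $H_n$. Here $F_{-1}=0$ because $D_{-1,\ast}=0$, and $F_n=H_n$ because the composite defining it is empty.

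The second spectral sequence follows by symmetry. I apply the first to the transposed double simplicial object $G^t_{p,q}=G_{q,p}$, which swaps horizontal and vertical structure, and note that $\Delta G^t=\Delta G$, since the diagonal faces $d_i^vd_i^h=d_i^hd_i^v$ commute.

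The main obstacle is the normality of $d$ at every page, which is what the semi-abelian setting demands beyond the classical argument. The delicate point is the edge row $p=0$, where $H_0^h$ of a non-abelian simplicial object is involved. I handle it by the abelianness of $H_q^v$ for $q\geqslant1$ and by the vanishing of $d_{0,0}$, which rests on $E_{2,-1}=0$.
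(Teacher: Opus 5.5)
Your proposal is correct and follows the paper's own argument essentially step by step. You build the degree-$2$ exact couple from the iterated long exact sequences, and you get normality of every $d^r$ from the abelianness of $E_{p,q}$ for $(p,q)\neq(0,0)$ together with $E_{2,-1}=0$. You then obtain convergence from the formulas for $B^\infty$ and $Z^\infty$ and the cokernel diagram, and the second spectral sequence by transposing $G$. One minor wording point: the normality of $d_{p,q}$ does not rely on anything specific to $d=gh$. It uses only that every morphism in a semi-abelian category factors as a normal epimorphism followed by its image monomorphism, and that image is normal because the codomain is abelian.
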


\section{Appendix: Some Useful Lemmas}

In this section, we state some classical lemmas that are needed all along the text. These are straightforward to prove, and satisfied in any pointed category with kernels and cokernels, except for the last two.

\begin{lemma}\label{lemma kernel of pullback}
	For a commutative diagram
	\[
		\begin{tikzpicture}
			\node(A) at (0,0){$\bullet$};
			\node(B) at (2,0){$\bullet$};
			\node(C) at (0,-2){$\bullet$};
			\node(D) at (2,-2){$\bullet$};
			\node(E) at (4,0){$\bullet$};
			\node(F) at (4,-2){$\bullet$};
			\draw[\normalmono] (A)--(B) node[above,midway]{$\mathrm{ker}(f)$};
			\draw[\normalmono] (C)--(D) node[below,midway]{$\mathrm{ker}(g)$};
			\draw[\map] (A)--(C) node[left,midway]{$\gamma$};
			\draw[\map] (B)--(D) node[left,midway]{$u$};
			\draw[\map] (B)--(E) node[above,midway]{$f$};
			\draw[\map] (D)--(F) node[below,midway]{$g$};
			\draw[\map] (E)--(F) node[left,midway]{$v$};
		\end{tikzpicture}
	\]
	if the right-hand square is a pullback, then $\gamma$ is an isomorphism.\noproof
\end{lemma}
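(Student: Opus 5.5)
We need to prove Lemma \ref{lemma kernel of pullback}: in a commutative diagram whose rows are $\ker(f)$ followed by $f$ and $\ker(g)$ followed by $g$, if the right-hand square $gu=vf$ is a pullback, then the induced map $\gamma$ between the kernels is an isomorphism. The plan is to write down an explicit inverse of $\gamma$ using the universal property of the pullback, and then use that $\ker(f)$ and $\ker(g)$ are monomorphisms to check both composites. Only pointedness and the existence of kernels and pullbacks are needed.

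\textbf{Step 1: construct the candidate inverse.} Consider the pair of morphisms $\ker(g)\colon K(g)\to D$ (the target of $u$) and $0\colon K(g)\to E$ (the target of $f$). They form a cone over the cospan $(g,v)$, because
\[
g\ker(g)=0=v\circ 0 .
\]
Since the square is a pullback, there is a unique $\delta\colon K(g)\to B$ with
\[
u\delta=\ker(g) \qquad\text{and}\qquad f\delta=0 .
\]
The second equation says that $\delta$ factors through $\ker(f)$: there is a unique $\gamma'\colon K(g)\to K(f)$ with $\ker(f)\gamma'=\delta$.

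\textbf{Step 2: $\gamma\gamma'=1$.} We compute
\[
\ker(g)\gamma\gamma'=u\ker(f)\gamma'=u\delta=\ker(g).
\]
Since $\ker(g)$ is a monomorphism, $\gamma\gamma'=1$.

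\textbf{Step 3: $\gamma'\gamma=1$.} This is the only point where the pullback is used a second time, through its uniqueness clause. The morphisms $\ker(f)\gamma'\gamma$ and $\ker(f)$, both from $K(f)$ to $B$, agree after composing with $u$:
\[
u\ker(f)\gamma'\gamma=u\delta\gamma=\ker(g)\gamma=u\ker(f).
\]
They also agree after composing with $f$, since both composites are $0$. By the uniqueness part of the pullback's universal property, $\ker(f)\gamma'\gamma=\ker(f)$. Since $\ker(f)$ is a monomorphism, $\gamma'\gamma=1$.

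Hence $\gamma$ is an isomorphism. There is no genuine obstacle here; the only care needed is to remember that the legs of the pullback are $u$ and $f$, and to invoke uniqueness in Step 3 rather than trying to cancel $u$, which need not be a monomorphism.
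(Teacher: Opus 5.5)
Your proof is correct: the cone $(\ker(g),0)$ over the cospan $(g,v)$ produces $\gamma'$, monicity of $\ker(g)$ gives $\gamma\gamma'=1$, and the uniqueness clause of the pullback (rather than cancelling $u$) gives $\gamma'\gamma=1$. The paper states this lemma without proof as a straightforward classical fact, and your argument is the standard verification. Equivalently, it unpacks pullback pasting: the square exhibiting $\ker(f)$ as the pullback of $0\to E$ along $f$, pasted with the given pullback square, shows that $u\ker(f)$ is a kernel of $g$.
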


\begin{lemma}\label{lemma cokernel of pushout}
	For a commutative diagram
	\[
		\begin{tikzpicture}
			\node(A) at (0,0){$\bullet$};
			\node(B) at (2,0){$\bullet$};
			\node(C) at (0,-2){$\bullet$};
			\node(D) at (2,-2){$\bullet$};
			\node(E) at (4,0){$\bullet$};
			\node(F) at (4,-2){$\bullet$};
			\draw[\normalepi] (B)--(E) node[above,midway]{$\mathrm{coker}(f)$};
			\draw[\normalepi] (D)--(F) node[below,midway]{$\mathrm{coker}(g)$};
			\draw[\map] (A)--(C) node[left,midway]{$u$};
			\draw[\map] (B)--(D) node[left,midway]{$v$};
			\draw[\map] (A)--(B) node[above,midway]{$f$};
			\draw[\map] (C)--(D) node[below,midway]{$g$};
			\draw[\map] (E)--(F) node[left,midway]{$\gamma$};
		\end{tikzpicture}
	\]
	if the left-hand square is a pushout, then $\gamma$ is an isomorphism.\noproof
\end{lemma}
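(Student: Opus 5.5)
The plan is a direct universal-property argument. Nothing specific to semi-abelian categories is needed, only the pushout property, the universal property of cokernels, and the fact that cokernels are epimorphisms. Name the objects of the diagram: $f\colon A\rightarrow B$, $g\colon C\rightarrow D$, $u\colon A\rightarrow C$, $v\colon B\rightarrow D$ with $vf=gu$. Write $c_f\colon B\norepi E$ for $\coker(f)$ and $c_g\colon D\norepi F$ for $\coker(g)$. Then $\gamma\colon E\rightarrow F$ is the morphism induced by $c_gvf=c_ggu=0$, characterised by $\gamma c_f=c_gv$. We construct an explicit inverse $\delta\colon F\rightarrow E$.

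First, apply the pushout property of the left-hand square to the pair of morphisms $c_f\colon B\rightarrow E$ and $0\colon C\rightarrow E$. They are compatible because $c_ff=0=0\circ u$. This gives a unique $\delta'\colon D\rightarrow E$ with $\delta'v=c_f$ and $\delta'g=0$. Since $\delta'g=0$, the universal property of $c_g=\coker(g)$ gives a unique $\delta\colon F\rightarrow E$ with $\delta c_g=\delta'$.

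Next, check that $\delta\gamma=1_E$. We have $\delta\gamma c_f=\delta c_gv=\delta'v=c_f$, and $c_f$ is an epimorphism, so $\delta\gamma=1_E$.

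Then check that $\gamma\delta=1_F$. It suffices to show $\gamma\delta'=c_g$, since then $\gamma\delta c_g=c_g$ and $c_g$ is an epimorphism. The two legs of a pushout are jointly epimorphic, so it is enough to test against $v$ and $g$. On the one hand $\gamma\delta'v=\gamma c_f=c_gv$. On the other hand $\gamma\delta'g=0=c_gg$. Hence $\gamma\delta'=c_g$ and so $\gamma\delta=1_F$, which makes $\gamma$ an isomorphism.

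No step is a genuine obstacle. The only point requiring care is the first step: one must choose the zero map on $C$ when using the pushout to produce $\delta'$. This construction is exactly the dual of the argument for Lemma \ref{lemma kernel of pullback}, so that lemma could alternatively be invoked in the opposite category $\C^{\op}$.
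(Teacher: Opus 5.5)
Your proof is correct: producing $\delta$ from the pushout property applied to the pair $(c_f,0)$, then checking $\delta\gamma=1_E$ and $\gamma\delta=1_F$ using that $c_f$, $c_g$ are epimorphisms and that the pushout legs are jointly epimorphic, is the routine universal-property argument the paper has in mind. The paper omits the proof as straightforward and notes that the lemma holds in any pointed category with kernels and cokernels, which matches your remark that nothing semi-abelian is used.
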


\begin{lemma}\label{lemma pullback recognition}
	Let $q\colon Y\rightarrow Z$, $q'\colon Y'\rightarrow Z'$ be two morphisms, $k=\ker(q)\colon X\normono Y$ and $k'\colon X'\rightarrowtail Y'$ a monomorphism such that $q'k'=0$, and suppose we have the following commutative diagram
	\[
		\begin{tikzpicture}
			\node(A) at (0,0){$X$};
			\node(B) at (2,0){$Y$};
			\node(C) at (0,-2){$X'$};
			\node(D) at (2,-2){$Y'$};
			\node(E) at (4,0){$Z$};
			\node(F) at (4,-2){$Z'$};
			\draw[\normalmono] (A)--(B) node[above,midway]{$k$};
			\draw[\mono] (C)--(D) node[below,midway]{$k'$};
			\draw[\map] (A)--(C) node[left,midway]{$u$};
			\draw[\map] (B)--(D) node[left,midway]{$v$};
			\draw[\map] (B)--(E) node[above,midway]{$q$};
			\draw[\map] (D)--(F) node[below,midway]{$q'$};
			\draw[\map] (E)--(F) node[left,midway]{$w$};
		\end{tikzpicture}
	\]
	If $w$ is a monomorphism, then the left-hand square is a pullback.\noproof
\end{lemma}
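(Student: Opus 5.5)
The plan is to verify the universal property of the pullback directly, using only that $k$ is a kernel and that $k'$ and $w$ are monomorphisms. Commutativity of the diagram means $vk=k'u$ and $wq=q'v$. So take an object $A$ with morphisms $a\colon A\rightarrow Y$ and $b\colon A\rightarrow X'$ such that $va=k'b$. I will produce a unique $c\colon A\rightarrow X$ with $kc=a$ and $uc=b$.

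First I show that $a$ factors through $k$. We have
\[
	wqa=q'va=q'k'b=0 ,
\]
using commutativity of the right-hand square and the hypothesis $q'k'=0$. Since $w$ is a monomorphism, this gives $qa=0$. Because $k=\ker(q)$, there is then a unique $c\colon A\rightarrow X$ with $kc=a$.

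Next I check the second leg. We have
\[
	k'uc=vkc=va=k'b ,
\]
and since $k'$ is a monomorphism, $uc=b$. Uniqueness of $c$ follows because $k$ is a monomorphism: any $c'$ with $kc'=a$ must equal $c$.

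There is no real obstacle; the argument is a routine diagram chase valid in any pointed category with kernels. The one point needing care is the step where the monomorphism $w$ turns the vanishing of $q'va$ into the vanishing of $qa$. This is exactly where the hypothesis on $w$ is used, and it is what lets $a$ factor through $\ker(q)$.
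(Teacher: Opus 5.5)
Your proof is correct: it verifies the universal property directly, using $w$ mono to get $qa=0$, factoring through $k=\ker(q)$, and then using $k'$ mono to match the second leg. The paper gives no proof of this lemma and calls it a straightforward fact valid in any pointed category with kernels and cokernels, so your diagram chase is exactly the routine argument it leaves to the reader.
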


\begin{lemma}\label{lemma pushout recognition}
	Let $k\colon X\rightarrow Y$, $k'\colon X'\rightarrow Y'$ be two morphisms, $q'=\coker(k')\colon Y'\norepi Z'$ and $q\colon Y\twoheadrightarrow Z$ an epimorphism such that $qk=0$, and suppose we have the following commutative diagram
	\[
		\begin{tikzpicture}
			\node(A) at (0,0){$X$};
			\node(B) at (2,0){$Y$};
			\node(C) at (0,-2){$X'$};
			\node(D) at (2,-2){$Y'$};
			\node(E) at (4,0){$Z$};
			\node(F) at (4,-2){$Z'$};
			\draw[\map] (A)--(B) node[above,midway]{$k$};
			\draw[\map] (C)--(D) node[below,midway]{$k'$};
			\draw[\map] (A)--(C) node[left,midway]{$u$};
			\draw[\map] (B)--(D) node[left,midway]{$v$};
			\draw[\epi] (B)--(E) node[above,midway]{$q$};
			\draw[\normalepi] (D)--(F) node[below,midway]{$q'$};
			\draw[\map] (E)--(F) node[left,midway]{$w$};
		\end{tikzpicture}
	\]
	If $u$ is an epimorphism, then the right-hand square is a pushout.\noproof
\end{lemma}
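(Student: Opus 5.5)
The plan is to verify the universal property of the pushout directly, using only that $q'$ is a cokernel, that $u$ and $q$ are epimorphisms, and that $qk=0$. No semi-abelian structure is needed, so the argument works in any pointed category with cokernels. We are given commutativity of both squares, namely $vk=k'u$ and $wq=q'v$.

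So I take an object $W$ with morphisms $a\colon Z\rightarrow W$ and $b\colon Y'\rightarrow W$ satisfying $aq=bv$. I must produce a unique $c\colon Z'\rightarrow W$ with $cq'=b$ and $cw=a$.

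\emph{Existence.} The first step is to show that $b$ kills $k'$:
\[
bk'u=bvk=aqk=0,
\]
and since $u$ is an epimorphism this gives $bk'=0$. This is the only place where the hypothesis on $u$ enters, and I expect it to be the one real point of the proof. Because $q'=\coker(k')$, there is then a unique $c\colon Z'\rightarrow W$ with $cq'=b$. Next I check the other leg:
\[
cwq=cq'v=bv=aq .
\]
Since $q$ is an epimorphism, this gives $cw=a$.

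\emph{Uniqueness.} Any $c'$ with $c'q'=b$ must equal $c$, because $q'$, being a cokernel, is an epimorphism.

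Hence the right-hand square is a pushout. There is no genuine obstacle beyond noticing that the epimorphism $u$ is what transfers the condition $qk=0$ along the top row to $bk'=0$ along the bottom row.
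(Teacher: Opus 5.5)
Your proof is correct, and it is the direct verification of the pushout property that the paper has in mind: the paper omits the proof (\verb|\noproof|), calling these lemmas straightforward and valid in any pointed category with kernels and cokernels. The key step is using that $u$ is an epimorphism to pass from $bk'u=aqk=0$ to $bk'=0$; the remaining steps use that $q$ and $q'$ are epimorphisms.
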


\begin{lemma}\label{lemma kernel of composite}
	If $f$ and $g$ are composable morphisms, then the kernel of $gf$ is the pullback of the kernel of $g$ along $f$, \ie we have the formula $K(gf)=f^{-1}K(g)$. Dually, the cokernel of $gf$ is the pushout of the cokernel of $f$ along $g$.\noproof
\end{lemma}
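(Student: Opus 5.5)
We verify the universal property of the pullback directly, using only that we work in a pointed category with kernels and cokernels. The dual statement then follows formally.

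Let $f\colon X\rightarrow Y$ and $g\colon Y\rightarrow Z$. Write $\ker(g)\colon K(g)\normono Y$ and $\ker(gf)\colon K(gf)\normono X$.

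\textbf{Step 1: the square.} Since $g f\ker(gf)=0$, the morphism $f\ker(gf)$ factors uniquely through $\ker(g)$ as $f\ker(gf)=\ker(g)\,\bar f$ for some $\bar f\colon K(gf)\rightarrow K(g)$. This gives a commutative square with $\ker(gf)$ and $\ker(g)$ as its vertical sides and $f$, $\bar f$ as its horizontal sides.

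\textbf{Step 2: universality.} Take $a\colon A\rightarrow X$ and $b\colon A\rightarrow K(g)$ with $fa=\ker(g)\,b$. Then
\[
gfa=g\ker(g)\,b=0,
\]
so $a$ factors uniquely as $a=\ker(gf)\,c$ for some $c\colon A\rightarrow K(gf)$. It remains to check $\bar f c=b$. We have
\[
\ker(g)\,\bar f c=f\ker(gf)\,c=fa=\ker(g)\,b,
\]
and $\ker(g)$ is a monomorphism, so $\bar f c=b$. Uniqueness of $c$ holds because $\ker(gf)$ is a monomorphism. Hence the square is a pullback, that is, $K(gf)=f^{-1}K(g)$.

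\textbf{Step 3: the dual.} Dualize the argument, swapping kernels for cokernels, monomorphisms for epimorphisms, and factoring through a kernel for factoring through a cokernel. Since $gf$ composed appropriately kills $f$, namely $\coker(gf)\,gf=0$, the morphism $\coker(gf)\,g$ factors as $\bar g\,\coker(f)$ for some $\bar g\colon C(f)\rightarrow C(gf)$. Given a cocone $u\colon Y\rightarrow W$ and $v\colon C(f)\rightarrow W$ with $ug=v\,\coker(f)$, we get $ugf=0$, so $u$ factors uniquely through $\coker(gf)$. The compatibility with $v$ follows because $\coker(f)$ is an epimorphism. So $C(gf)$ is the pushout of $\coker(f)$ along $g$.

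There is no real obstacle. The only point needing care is to use the monicity of $\ker(g)$ (respectively the epicity of $\coker(f)$) to recover the second leg of the cone.
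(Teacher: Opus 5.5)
Your proof is correct and is the routine universal-property check the paper intends when it leaves this lemma unproved as holding in any pointed category with kernels and cokernels. One small slip: in Step 3 the cocone leg $u$ should be a map $Z\rightarrow W$ out of the codomain of $g$, not $Y\rightarrow W$; the rest of the argument goes through unchanged.
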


\begin{lemma}\label{lemma pb with normal epis}
	Consider a commutative square
	\[
		\begin{tikzpicture}
			\node(A) at (0,0){$A$};
			\node(B) at (2,0){$B$};
			\node(C) at (0,-2){$C$};
			\node(D) at (2,-2){$D$};

			\draw[\map] (A)--(B) node[above,midway] {$f$};
			\draw[\map] (C)--(D) node[below,midway] {$g$};
			\draw[\map] (A)--(C);
			\draw[\map] (B)--(D);
		\end{tikzpicture}
	\]
	If it is a pullback and $f$ and $g$ are normal epimorphisms, then it is also a pushout. Dually, if it is a pushout and $f$ and $g$ are normal monomorphisms, then it is also a pullback.\noproof
\end{lemma}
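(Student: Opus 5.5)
We argue directly from the universal properties. The only input is that the pullback (dually, the pushout) identifies the kernels (dually, the cokernels) of the two parallel normal maps. Write $\alpha\colon A\rightarrow C$ and $\beta\colon B\rightarrow D$ for the vertical maps, so that $\beta f=g\alpha$.

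\emph{First statement.} Assume the square is a pullback and $f$, $g$ are normal epimorphisms.

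First, $\alpha$ restricts to a morphism $\gamma\colon K(f)\rightarrow K(g)$ with $\ker(g)\gamma=\alpha\ker(f)$; it exists because $g\alpha\ker(f)=\beta f\ker(f)=0$. We apply Lemma \ref{lemma kernel of pullback} to the diagram
\[
K(f)\normono A\xrightarrow{f}B,\qquad K(g)\normono C\xrightarrow{g}D,
\]
whose right-hand square is our pullback. It shows that $\gamma$ is an isomorphism.

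Next, let $u\colon C\rightarrow X$ and $v\colon B\rightarrow X$ satisfy $u\alpha=vf$. Then
\[
u\ker(g)\gamma=u\alpha\ker(f)=vf\ker(f)=0,
\]
and since $\gamma$ is an isomorphism, $u\ker(g)=0$. Because $g$ is a normal epimorphism, $g=\coker(\ker(g))$, so there is a unique $w\colon D\rightarrow X$ with $wg=u$.

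Then $w\beta=v$: indeed $w\beta f=wg\alpha=u\alpha=vf$, and $f$ is an epimorphism. Uniqueness of $w$ holds because $g$ is an epimorphism. Hence the square is a pushout.

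\emph{Dual statement.} Assume the square is a pushout and $f$, $g$ are normal monomorphisms. The argument is formally dual.

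First, $\beta$ induces $\bar\gamma\colon C(f)\rightarrow C(g)$ with $\bar\gamma\coker(f)=\coker(g)\beta$. By Lemma \ref{lemma cokernel of pushout}, $\bar\gamma$ is an isomorphism.

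Next, let $u\colon X\rightarrow C$ and $v\colon X\rightarrow B$ satisfy $\alpha u=fv$. Suppose instead that $gu'=\beta v$ for maps $u'\colon X\rightarrow C$, $v\colon X\rightarrow B$, which is the cone condition for the pullback of $\beta$ and $g$. Then
\[
\bar\gamma\coker(f)v=\coker(g)\beta v=\coker(g)gu'=0,
\]
so $\coker(f)v=0$. Since $f=\ker(\coker(f))$, $v$ factors uniquely as $v=fw$.

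Then $\alpha w=u'$, because $g\alpha w=\beta fw=\beta v=gu'$ and $g$ is a monomorphism. Uniqueness of $w$ holds because $f$ is a monomorphism. Hence the square is a pullback.

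\emph{Remark.} There is no real obstacle. The only step needing care is identifying the kernels in the first case (respectively the cokernels in the dual case), which is exactly what Lemma \ref{lemma kernel of pullback} (respectively Lemma \ref{lemma cokernel of pushout}) supplies. Only normality of $f$ and $g$ is used, so the argument holds in any pointed category with kernels and cokernels, in particular in $\C$.
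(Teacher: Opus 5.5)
Your proof is correct. The paper states this lemma without proof, calling it straightforward, and your argument is the routine check being left to the reader. You identify the kernels of $f$ and $g$ via Lemma~\ref{lemma kernel of pullback} (dually, the cokernels via Lemma~\ref{lemma cokernel of pushout}) and then use that $g$ is the cokernel of its kernel (dually, that $f$ is the kernel of its cokernel). It uses only that $g$ is a normal epimorphism and $f$ an epimorphism (dually, $f$ a normal monomorphism and $g$ a monomorphism). So it indeed holds in any pointed category with kernels and cokernels.

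One cleanup is needed in the dual part. Delete the sentence ``let $u\colon X\rightarrow C$ and $v\colon X\rightarrow B$ satisfy $\alpha u=fv$'', which does not typecheck. The correct cone condition $gu'=\beta v$ that immediately follows it is what the rest of your argument uses.
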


\begin{lemma}\label{lemma pushout with epi}
	Consider a commutative diagram in which the outer rectangle is a pushout
	\[
		\begin{tikzpicture}
			\node(A) at (0,0){$\bullet$};
			\node(B) at (2,0){$\bullet$};
			\node(C) at (0,-2){$\bullet$};
			\node(D) at (2,-2){$\bullet$};
			\node(E) at (4,0){$\bullet$};
			\node(F) at (4,-2){$\bullet$};
			\draw[\map] (B)--(E) node[above,midway]{};
			\draw[\map] (D)--(F) node[below,midway]{};
			\draw[\map] (A)--(C) node[left,midway]{};
			\draw[\map] (B)--(D) node[left,midway]{};
			\draw[\map] (A)--(B) node[above,midway]{};
			\draw[\map] (C)--(D) node[below,midway]{$f$};
			\draw[\map] (E)--(F) node[left,midway]{};
		\end{tikzpicture}
	\]
	If $f$ is an epimorphism, then the right-hand square is a pushout.\noproof
\end{lemma}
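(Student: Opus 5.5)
The plan is to check the universal property of the right-hand square directly, using the universal property of the outer rectangle for existence and uniqueness, and the epimorphism $f$ to pass from the rectangle back to the square. Label the diagram as follows. The top row is $A\xrightarrow{a}B\xrightarrow{b}E$, the bottom row is $C\xrightarrow{f}D\xrightarrow{c}F$, and the vertical maps are $x\colon A\rightarrow C$, $y\colon B\rightarrow D$ and $z\colon E\rightarrow F$. Commutativity means $ya=fx$ and $zb=cy$. By hypothesis the outer rectangle, with legs $ba\colon A\rightarrow E$ and $x\colon A\rightarrow C$, is a pushout with coprojections $z$ and $cf$.

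For existence, let $\alpha\colon D\rightarrow X$ and $\beta\colon E\rightarrow X$ satisfy $\alpha y=\beta b$. Then $(\alpha f)x=\alpha ya=\beta ba$, so the pair $(\alpha f,\beta)$ is a cocone on the span $C\xleftarrow{x}A\xrightarrow{ba}E$. The pushout property of the outer rectangle gives $u\colon F\rightarrow X$ with $ucf=\alpha f$ and $uz=\beta$. Since $f$ is an epimorphism, $ucf=\alpha f$ gives $uc=\alpha$. Hence $u$ is a factorization of $(\alpha,\beta)$ through the right-hand square.

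For uniqueness, any $u'$ with $u'c=\alpha$ and $u'z=\beta$ also satisfies $u'cf=\alpha f$ and $u'z=\beta$. By uniqueness in the outer pushout, $u'=u$.

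No real obstacle is expected; the only point needing care is the cancellation of $f$, which is exactly where the epimorphism hypothesis is used. The argument works in any category and does not need semi-abelianness.
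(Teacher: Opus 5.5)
Your proof is correct. The paper states this lemma without proof as a straightforward classical fact, and your direct check of the universal property is the standard argument: the outer pushout gives existence and uniqueness, and cancelling the epimorphism $f$ turns $ucf=\alpha f$ into $uc=\alpha$. As you note, this works in any category and needs no semi-abelian hypothesis.
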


The following lemma gives equivalent characterizations of what is called \textit{di-exactness} in a pointed category with kernels and cokernels. This notion is due to Van der Linden and Peschke in \cite{PVdL3}, and is true in every semi-abelian category. It is closely related to ``old-style'' axiom (SA*\,6) in \cite{Janelidze-Marki-Tholen}, which is well known---see, for instance, \cite{PVdL3} for an explicit proof---to be equivalent to Barr exactness in a homological category with finite colimits.

\begin{lemma} \label{lemma diexact}
	In a pointed category with kernels and cokernels, the following are equivalent:
	\begin{itemize}
		\item Let $e$ be a normal epimorphism, $m$ a normal monomorphism, and $f\coloneq em$. Then $f$ is a normal map.
		\item Suppose we have a commutative diagram with short exact rows
		      \[
			      \begin{tikzpicture}
				      \node(A) at (0,0){$\bullet$};
				      \node(B) at (2,0){$\bullet$};
				      \node(C) at (0,-2){$\bullet$};
				      \node(D) at (2,-2){$\bullet$};
				      \node(E) at (4,0){$\bullet$};
				      \node(F) at (4,-2){$\bullet$};
				      \draw[\normalmono] (A)--(B);
				      \draw[\normalmono] (C)--(D);
				      \draw[\map] (A)--(C) node[left,midway]{$u$};
				      \draw[\map] (B)--(D) node[left,midway]{$v$};
				      \draw[\normalepi] (B)--(E);
				      \draw[\normalepi] (D)--(F);
				      \draw[\map] (E)--(F) node[left,midway]{$w$};
			      \end{tikzpicture}
		      \]
		      If the left-hand square is a pullback of normal monomorphisms, then $w$ is a normal monomorphism. Dually, if the right-hand square is a pushout of normal epimorphisms, then $u$ is a normal epimorphism.
	\end{itemize}
	Both are satisfied in any semi-abelian category.\noproof
\end{lemma}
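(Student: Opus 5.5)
The statement to prove is the appendix Lemma~\ref{lemma diexact}. I would first prove the equivalence of the two formulations in an arbitrary pointed category with kernels and cokernels. Then I would verify that they hold in a semi-abelian category $\C$.

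\emph{First condition implies second.} Take the diagram with short exact rows whose left-hand square is a pullback of normal monomorphisms. Write the rows as $k\colon A\normono B\norepi C$ and $k'\colon A'\normono B'\norepi C'$, with $q'\colon B'\norepi C'$ and the middle map $v\colon B\normono B'$.
\begin{itemize}
\item The composite $q'v$ is a normal monomorphism followed by a normal epimorphism, so by the first condition it factors as $q'v=me$ with $m$ a normal monomorphism and $e$ a normal epimorphism.
\item Since the square is a pullback, Lemma~\ref{lemma kernel of composite} gives $K(q'v)=v^{-1}K(q')=A$.
\item Hence $e=\coker(k)$, i.e.\ $e$ is $B\norepi C$ up to isomorphism, and $w=m$ is a normal monomorphism.
\end{itemize}
The dual statement, about a pushout of normal epimorphisms, follows by the dual argument.

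\emph{Second condition implies first.} Given $f=em$ with $m\colon X\normono Y$ and $e\colon Y\norepi Z$, I would build a diagram with two short exact rows:
\begin{itemize}
\item the top row is $K(e)\cap X\normono X\norepi X/(K(e)\cap X)$;
\item the bottom row is $K(e)\normono Y\norepi Z$;
\item the left square is the pullback of $\ker(e)$ along $m$; both legs are normal monomorphisms, because pullbacks of normal monomorphisms are normal.
\end{itemize}
The second condition then makes the induced map $X/(K(e)\cap X)\rightarrow Z$ a normal monomorphism. The top-row epimorphism is a normal epimorphism by construction, and the two compose to $f$. So $f$ is normal.

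\emph{Validity in a semi-abelian category.} Here I would prove the first formulation directly. Every morphism $f$ factors as a regular epimorphism followed by a monomorphism, and regular epimorphisms are normal in a homological category. So it remains to show that the image $e(X)$ of a normal subobject $X$ of $Y$ along a regular epimorphism $e$ is again normal. This is the classical fact that direct images of normal subobjects along regular epimorphisms are normal in exact Mal'tsev categories, equivalently axiom (SA*\,6).

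To prove it, I would represent the normal monomorphism $m$ as a normal subobject associated with an equivalence relation $R$ on $Y$, so that $X$ is the zero-class of $R$. I would then take the direct image of $R$ along $e$ in the regular category. In a Mal'tsev category this image is again an equivalence relation, since reflexive relations are equivalences. Its zero-class is computed as the image of $X$, which uses protomodularity. Barr exactness then makes $e(X)$ a kernel.

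The main obstacle is this last step: checking carefully that the normalization of the image relation is exactly $e(X)$. If that becomes delicate, I would instead cite \cite{PVdL3} or \cite{Janelidze-Marki-Tholen}, as the paper does.
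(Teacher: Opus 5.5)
Your proposal is correct. It supplies an argument where the paper gives none: the paper states this lemma without proof, refers to Peschke and Van der Linden, and only remarks that di-exactness is close to the old-style axiom (SA*\,6), which is equivalent to Barr exactness in a homological category.

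Your two implications are purely formal and hold in any pointed category with kernels and cokernels. They use only the following facts:
\begin{itemize}
\item $K(gf)=f^{-1}K(g)$;
\item $\ker(me)=\ker(e)$ when $m$ is monic;
\item a normal epimorphism is the cokernel of its kernel, and dually;
\item pullbacks of kernels are kernels;
\item the first condition is self-dual, which is what justifies your \emph{dual argument}.
\end{itemize}

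For validity, you use the (regular epi, mono) factorization and the fact that regular and normal epimorphisms coincide in a homological category. This reduces the claim to stability of normal subobjects under direct images along regular epimorphisms, which is exactly the (SA*\,6)-type fact the paper points to. Compared with the bare citation, your route shows that exactness enters only in the validity claim, not in the equivalence.

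The step you flag as the main obstacle does go through. It is the Mal'tsev property, not protomodularity, that does the work:
\begin{itemize}
\item Since equivalence relations permute, $(e\times e)^{-1}(e(R))=\mathrm{Eq}(e)\circ R\circ\mathrm{Eq}(e)=R\circ\mathrm{Eq}(e)$.
\item The zero-class of $R\circ\mathrm{Eq}(e)$ is $e^{-1}(e(X))$.
\item Hence the preimage under $e$ of the zero-class of $e(R)$ is $e^{-1}(e(X))$.
\item Applying $e$, and using $e(e^{-1}(S))=S$ for a regular epimorphism $e$, the zero-class of $e(R)$ is exactly $e(X)$.
\end{itemize}
Protomodularity is needed only to know that regular epimorphisms are normal.
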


\section*{Acknowledgements}

I would like to express my sincere gratitude to my supervisor Tim Van der Linden for bringing me into this topic, and for his support and guidance throughout my research. I am also grateful to Rodrigue Haya Enriquez and David Forsman for the fruitful discussions about this work.

\providecommand{\noopsort}[1]{}
\providecommand{\bysame}{\leavevmode\hbox to3em{\hrulefill}\thinspace}
\providecommand{\MR}{\relax\ifhmode\unskip\space\fi MR }
\providecommand{\MRhref}[2]{%
	\href{http://www.ams.org/mathscinet-getitem?mr=#1}{#2}
}
\providecommand{\href}[2]{#2}

\end{document}